\theoremstyle{theorem}
\newtheorem{theorem}{Theorem}[section]
\newtheorem{lemma}[theorem]{Lemma}
\newtheorem{corollary}[theorem]{Corollary}
\newtheorem{proposition}[theorem]{Proposition}
\theoremstyle{remark}
\newtheorem{remark}[theorem]{Remark}
\newtheorem{example}[theorem]{Example}
\numberwithin{equation}{section}
\DeclareMathOperator{\dist}{dist}
\DeclareMathOperator{\conv}{conv}
\DeclareMathOperator{\ee}{e}
\newcommand{\Comp}{\mathbb C}
\newcommand{\norm}[1]{\left\| #1\right\|}
\title[The algebraic numerical range as a spectral set]{The algebraic numerical range as a spectral set in Banach algebras}
\author{Hanna Blazhko}
\author{Daniil Homza}
\author{Felix L.~Schwenninger}
\author{\\Jens de Vries}
\author{Micha\l{} Wojtylak}
\address{Jagiellonian University, Faculty of Mathematics and Computer Science, Krak\'ow, Poland}
\email{hanna.blazhko@student.uj.edu.pl} 
\email{daniil.homza@student.uj.edu.pl}
\email{michal.wojtylak@uj.edu.pl}
\address{University of Twente, Department of Applied Mathematics, Enschede, The Netherlands}
\email{f.l.schwenninger@utwente.nl}
\email{j.devries-4@utwente.nl}
\subjclass{47A25, 47A12, 47B48, 15A60}
\keywords{Algebraic numerical range, functional calculus, Banach algebra, spectral constant, Crouzeix's conjecture}
\begin{document}

\begin{abstract}
    {We investigate when the algebraic numerical range is a $C$-spectral set in a Banach algebra. While providing several counterexamples based on classical ideas as well as combinatorial Banach spaces, we discuss positive results for matrix algebras and provide an absolute constant in the case of complex $2\times2$-matrices with the induced $1$-norm. Furthermore, we discuss positive results for infinite-dimensional Banach algebras, including the Calkin algebra.}
\end{abstract}
\maketitle

\section{Introduction}

Since the discovery of the von Neumann inequality the theory of spectral sets has evolved in many directions, see e.g. \cite{badea2013spectralsets} for a broad overview. Our interest will lie in $C$-spectral sets. recall that a compact, simply connected set $\Omega$ is a \textit{$C$-spectral set} ($C>0$) for the operator $T$ if it contains the spectrum of $T$ and satisfies the inequality
\begin{equation}\label{Cspectral}
\norm{p(T)}\leq C\sup_{z\in\Omega} |p(z)|,\quad p\in\Comp[z],
\end{equation}
(see Section~\ref{sec:SC} for details). 
A particularly well-studied $C$-spectral set for a Hilbert space operator $T$ is the numerical range $
W(T) := \{ \langle Tx, x \rangle : \langle x, x \rangle = 1 \}.
$
The first result underlining the role of the numerical range in this context is the seminal work by Delyon--Delyon \cite{delyondelyon1999}, in which the constant $C$ depends on $W(T)$.  In 2007 Crouzeix \cite{Crouzeix2007} showed that there exists a universal constant $C$   {between} $2$ and $11.08$ such that the numerical range of $T$ is a $C$-spectral set. 
Crouzeix's conjecture \cite{Crouzeix2004}, stating that the optimal constant for $C$ is $2$, remains open to this day.
  {The best estimate so far} of the constant $C\leq   1 + \sqrt{2} $ was provided by Crouzeix and Palencia \cite{Crouzeix2017} in 2017, see also \cite{ransford2018remarks,schwenninger2024abstract} for shorter proofs. In due course the conjecture was shown to be true in certain special cases, see, e.g., \cite{bickel2023crouzeix,bickel2024blaschke,bickel2020crouzeix,chen2024gmres,choi2013proof,crouzeix2024numerical,o2023crouzeix}. {   We also mention recent results  \cite[Theorem 2, Proposition 26]{malman2024double} showing that the value $1+\sqrt{2}$ is not attained for any Hilbert space operator.}

Meanwhile, the theory of spectral sets in Banach spaces appears to be much more demanding. Recall that the von Neumann inequality can be restated as saying that the closed unit disk  
is a $1$-spectral set for any contraction on a Hilbert space.   {In contrast to this, there are Banach-space contractions for which the closed unit disk fails to be a $C$-spectral set for every $C > 0$.} 
This is usually argued in the literature by saying that there exists an operator with unit norm which is not \emph{polynomially bounded}, see e.g.\ \cite[Section 4]{Cohen2018} or \cite{lebow1968power,gillespie1975spectral,Gillespie2015}. 
On the other hand,  Katsnelson and Matsaev demonstrated in \cite{KM66} that for any contraction $T$ on any Banach space the disk $3\overline{\mathbb{D}}$ is a 1-spectral set, and furthermore, that the constant 3 is sharp. In fact, this result is a trivial consequence of a much older inequality attributed to Bohr, see \cite{Bohr14}, for disk algebra functions, see also \cite{paulsen2022} for a recent short proof {   and \cite{knese2024three} for multivariate analogues.}

This paper deals with the question to what extent the fact that the numerical range is $C$-spectral for Hilbert spaces operators can be generalized to Banach algebras. 
The most suitable generalization of the numerical range seems to be the \textit{algebraic numerical range} of an element $T$ of a Banach algebra $\mathcal A$, i.e., the set
\begin{align*}
V(T) := \{\phi(T): \phi\in \mathcal A', \|\phi\|=1=\phi(I) \},
\end{align*}
where  $I$ stands for the unit and $\mathcal{A}'$ stands for its dual space. Like in the Hilbert space setting, the algebraic numerical range contains the spectrum and is contained in the disk of radius $\norm T$.
It is also clear from the definition that $V(\alpha T)=\alpha T$ for $\alpha\in\Comp$. Hence, one may assume without loss of generality that $T$ is of norm one and consequently $V(T)$ is then contained in the closed unit disk. 
Note that  for operators with unit norm $C$-spectrality of $V(T)$ trivially implies polynomial boundedness. 
The above mentioned examples on Banach spaces thus provide operators for which \eqref{Cspectral} with $\Omega=V(T)$ does not hold for any $C>0$.  
We will strengthen this by showing that there even exists a polynomially bounded operator $T$ with unit norm such that $V(T)$ is not $C$-spectral for any $C$, see Example \ref{ex:polybddinfspectral}.

The paper is organized as follows.  In Section~\ref{s:prel} we review preliminary properties of the algebraic numerical range.
In Section~\ref{sec:SC} we introduce the spectral constant of the numerical range and discuss its general properties. 
In Section~\ref{s:MA} we discuss matrix algebras.  
In Section~\ref{s:fin} we focus on some special classes of Banach algebras for which the   {numerical-range} spectral constant is finite: Continuous function algebras and Calkin algebras. In particular, this shows that Crouzeix's conjecture on Hilbert space operators can be rephrased based on the \emph{essential numerical range} instead of the numerical range. In Section~\ref{sec:lp} we provide several examples in which the spectral constant of the numerical range is infinite. Namely, we consider the shift operators in $\ell^p$ and a  cut-shift in a combinatorial Banach space.  In Section~\ref{s:l1} we study in detail the case of $2\times 2$-matrices for $\ell_1$ induced norm and show that the spectral constant of any $2\times2$-matrix is bounded above by $13$.

The striking fact about Crouzeix's result is that the numerical range is a $C$-spectral set with an absolute constant $C$. In particular, this shows that the constant can be bounded independently of the dimension of the Hilbert space. While our results show that the latter is not true for general Banach algebras, we prove that for the specific case of $\mathcal{B}(\mathbb{C}^2)$ with the induced $1$-norm, there also exists a uniform bound on the spectral constant. It is interesting to note that our proof strongly exploits a nice representation, of interest in its own right, of the corresponding (algebraic) numerical range for the particular space. This, in a way, relates to Crouzeix's initial result on $2\times 2$-matrices \cite{Crouzeix2004} (with the optimal constant $2$), strongly resting on the fact that the (Hilbert space) numerical range is an ellipse in that case.

\section{Preliminaries}\label{s:prel}

Let $\mathcal{A}$ be a complex Banach algebra with a unit $I$, and let $ \mathcal{A}'$ denote the set of continuous linear functionals on $\mathcal A$ (dual space).   For $T \in \mathcal{A}$, the \textit{algebraic numerical range} $V(T)$ is defined as 
$$
V(T) := \{\phi(T): \phi\in \mathcal A', \|\phi\|=1=\phi(I) \}.
$$
The set $V(T)$ is compact and convex. We denote by $\mathcal B(X)$ the algebra of bounded linear operators on a Banach space $X$. Recall for $\mathcal A$ being the algebra $\mathcal B(H)$ of bounded operators on a Hilbert space $H$, the algebraic numerical range coincides with the closure of the usual numerical range, cf.~\cite[Th.6]{stampfli1968}.

 To specify the algebra with respect to which $V(T)$ is defined, we will sometimes write $V(T, \mathcal{A})$. Usually, we will do this in connection with the following important observation \cite[Th. 4]{bonsall71BookI}: If $\mathcal{B}\subseteq\mathcal{A}$ is a subalgebra 
of $\mathcal{A}$ sharing the same unit, then 
 \begin{equation}\label{VBVA}
V(T,\mathcal B)=V(T,\mathcal A)
 \end{equation}
 for all $T\in\mathcal{B}$. Further, by $\nu(T)$ we define the \textit{algebraic numerical radius} 
$$ 
\nu(T) := \sup \{ |z| : z \in V(T) \}.
$$
For the basic properties of the algebraic numerical range we refer the reader to \cite{bonsall71BookI}  {. Below} we only highlight the ones that are most relevant for our purposes.  For further results on equivalent definitions and geometry see, e.g., \cite{bonsall73BookII,lumer61,stampfli1968}.

  {The algebraic numerical range always contains the spectrum
\begin{align*}
    \sigma(T):=\{\lambda\in\mathbb{C}: \lambda I-T \ \text{is not invertible}\}.
\end{align*}
In the commutative case the inclusion goes easily by characters, for the general case we refer to \cite[Th. 6]{bonsall71BookI} and \cite[Th.1]{stampfli1968}.} 
In particular, the spectral radius $\rho(T):=\sup_{\lambda\in\sigma(T)}|\lambda|$ satisfies $\rho(T)\leq\nu(T)$.
Further, for any $T\in\mathcal{A}$ it holds that
\begin{equation}\label{nuenu}
\nu(T)\leq \| T \| \leq \ee \nu(T),
\end{equation}
see \cite[S.4 Th. 1, p.34]{bonsall71BookI}, and if $\mathcal A$ is a $C^*$-algebra the constant $\ee$ can be improved to $2$, see \cite{NagyFoiasbook}.

The following equality shown in \cite[Th.4]{stampfli1968}, see also \cite{hildebrandt66}, will be of particular importance when searching for an explicit form of the algebraic numerical range:

\begin{equation}\label{circleset}
    V(T) = \bigcap_{\lambda \in \mathbb{C}} D(-\lambda,  \|T+\lambda I \|) ,
    \end{equation}
where $D(z,r)\subset \mathbb C$ denotes a closed disk of radius $r$ centered at $z$. An immediate consequence is that 
$V(T)$ is a compact convex subset of $\Comp$.
An even more useful result for plotting approximations of $V(T)$ is \cite[Th. 2.5]{bonsall71BookI}, which states that the algebraic numerical range $V(T)$ can be represented as an intersection of hyperplanes:
\begin{equation}\label{hyperset}
    V(T)=\bigcap_{\theta \in [0,2\pi)} H_{\theta} (T),
\end{equation}
where
\begin{equation}
    H_{\theta} (T):= \{ e^{i\theta}\alpha: \ \alpha \in \mathbb{C}, \ \text{Re} (\alpha) \leq r_{\theta} (T) \}
\end{equation}
with
\begin{equation}\label{r-theta}
    r_{\theta} (T) := \inf_{t \in [0,\infty)} \left\{  \|e^{-i\theta}T +tI \| - t  \right\}.
\end{equation}
Recall that the algebraic numerical range is preserved under affine transformations:
\begin{equation}\label{alphabeta}
V(\alpha T+\beta I)=\alpha V(T)+\beta
\end{equation}
for all $\alpha,\beta\in\mathbb{C}$, $T\in\mathcal A$ (however, in general not under polynomial transformations). This, together with \eqref{nuenu}, implies that
\begin{equation}\label{singleton}
V(T)=\{\lambda_0\} \ \text{if and only if} \ T=\lambda_0 I.
\end{equation}

Another simple yet useful observation we provide along with the proof. We write $d_{\mathrm{H}}$ for the Hausdorff metric on the set of non-empty compact subsets of the complex plane.

\begin{lemma}\label{Hau}
Let $\mathcal{A}$ be a unital Banach algebra. For all $S,T\in\mathcal{A}$ it holds that
\begin{align*}
	d_{\mathrm{H}}(V(S),V(T))\leq\|S-T\|.
\end{align*}
In particular, the mapping $T\mapsto V(T)$ is uniformly continuous.
\end{lemma}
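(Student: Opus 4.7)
The plan is to argue directly from the defining supremum of the algebraic numerical range: a single functional $\phi$ witnessing a point in $V(S)$ can be reused to produce a nearby point in $V(T)$, with the error bounded by $\|S-T\|$ thanks to $\|\phi\|=1$.

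First, I would fix an arbitrary $z\in V(S)$ and select $\phi\in\mathcal{A}'$ with $\|\phi\|=1=\phi(I)$ and $\phi(S)=z$. Since this same $\phi$ satisfies the defining conditions for $V(T)$, the point $\phi(T)$ lies in $V(T)$, and
\begin{equation*}
|z-\phi(T)|=|\phi(S-T)|\leq\|\phi\|\,\|S-T\|=\|S-T\|.
\end{equation*}
Consequently $\dist(z,V(T))\leq\|S-T\|$, and taking the supremum over $z\in V(S)$ yields
\begin{equation*}
\sup_{z\in V(S)}\dist(z,V(T))\leq\|S-T\|.
\end{equation*}

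Next, the argument is symmetric in $S$ and $T$, so interchanging their roles gives the analogous bound $\sup_{w\in V(T)}\dist(w,V(S))\leq\|S-T\|$. Combining the two inequalities according to the definition of the Hausdorff distance produces $d_{\mathrm{H}}(V(S),V(T))\leq\|S-T\|$. Uniform continuity of $T\mapsto V(T)$ is then immediate.

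There is no real obstacle here; the only point to keep in mind is that $V(S)$ and $V(T)$ are nonempty and compact (as recorded in Section~\ref{s:prel}), so that both one-sided suprema defining $d_{\mathrm{H}}$ are attained and the Hausdorff distance is well defined. Alternatively, one could avoid the direct supremum argument and reason from the representation \eqref{circleset}, noting $\|S+\lambda I\|$ differs from $\|T+\lambda I\|$ by at most $\|S-T\|$ and pushing this through to the intersection of disks; however, the functional-based proof outlined above is both shorter and more transparent.
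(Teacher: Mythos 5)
Your argument is correct and is essentially the paper's own proof: both use the same functional $\phi$ witnessing a point of one numerical range to produce a point of the other within distance $\|\phi(S)-\phi(T)\|\leq\|S-T\|$, then invoke symmetry and the definition of the Hausdorff metric. No gaps; the remark on nonemptiness and compactness is fine but not essential to the estimate.
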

\begin{proof}
For each $\phi\in\mathcal{A}^{*}$ with $\phi(1)=1$ and $\|\phi\|=1$ we have
\begin{align*}
	\operatorname{dist}(\phi(S),V(T))\leq|\phi(S)-\phi(T)|\leq\|S-T\|
\end{align*}
and, likewise,
\begin{align*}
	\operatorname{dist}(V(S),\phi(T))\leq|\phi(S)-\phi(T)|\leq\|S-T\|.
\end{align*}
So by definition of the Hausdorff distance we have
\begin{align*}
d_{H}(V(S),V(T))=\max\Big\{\sup_{z\in V(S)}\operatorname{dist}(z,V(T)),\sup_{z\in V(T)}\operatorname{dist}(V(S),z)\Big\}\leq\|S-T\|
\end{align*}
as desired.
\end{proof}
We conclude this list of properties of $V(T)$ with the following estimate for the growth of the resolvent, cf.\ \cite[Lemma 1]{stampfli1968},
\begin{equation}\label{resineq}
    \norm{(\lambda I -T)^{-1}}\leq \dist(\lambda,V(T))^{-1},\quad \lambda\notin V(T).
\end{equation}

\section{ Spectral constants}\label{sec:SC}
Let us turn to the main object of the study. Hereinafter let $\mathcal A$ denote a complex Banach algebra with the unit $I$. 
Let $\Omega$ be an open or closed, simply connected bounded set. We say that $\Omega$ is a $C$-spectral set for $T\in\mathcal A$ if $\Omega$ contains the spectrum of $T$ and \eqref{Cspectral} holds. Note that it is also possible to define $C$-spectrality for multiply connected and unbounded sets, replacing polynomials by rational functions. However, this is not relevant for the purposes of the current paper. 

We say that an element $T\in\mathcal A$ is \emph{polynomially bounded} if there exists $C>0$ such that
$$
\norm{p(T)}\leq C \sup_{|z|\leq1} |p(z)|,
$$
for all $p\in \mathbb{C}[z]$. 
We will typically investigate whether $\norm T\overline{\mathbb D}$ is a $C$-spectral set for $T$, which is equivalent to $T/\norm T$ being polynomially bounded.

For an element $T\in\mathcal A$ we consider the linear mapping
$$
\Phi_T: \Comp[z]\ni f \mapsto f(T)\in \mathcal A.
$$
and we define the \emph{numerical  {-}range spectral constant of $T$} as
\begin{equation}\label{PsiT}
 \Psi(T):=\Psi(T,\mathcal A):=\inf\{C>0: \| p(T) \| \leq C \sup_{V(T)} | p|,\ p\in\Comp[\lambda]  \},
\end{equation}
with $\inf\emptyset=\infty$. If finite, $\Psi(T)$ it it the smallest $C\geq0$ for which \eqref{Cspectral} holds with $\Omega=V(T)$.
Note that for  $T=\lambda_0 I$  ($\lambda_0\in\Comp$) we have   $\Psi(T)=1$. For $T$ not being the multiple of the unit $I$ we have that $V(T)$ is a convex set that is not a singleton, cf.\ \eqref{singleton}.  In this case $\Psi(T)$ is the operator norm of $\Phi_T$  with respect to the supremum norm on $V(T)$ and the usual norm on $\mathcal A$, provided $\Phi_T$ is bounded.   {As was already pointed out} in the introduction, if $T$ is a contraction that is not polynomially bounded, we have $\Psi(T,\mathcal{B}(X))=\infty$, due to the first inequality in \eqref{nuenu}.

Finally, 
we define the \emph{(algebraic) numerical  {-}range spectral constant} of the algebra $\mathcal A$ by
\begin{equation}\label{PsiA}
\Psi_{\mathcal A}:=\sup_{T\in\mathcal A} \Psi(T) \in[0,\infty].
\end{equation}
Later on we will skip the adjective `algebraic' for brevity. Directly from \eqref{VBVA} we get that if $\mathcal B$ is a subalgebra of $\mathcal{A}$ sharing the same unit  {. Then} 
\begin{equation}\label{PsiAB}
    \Psi(T;\mathcal B)=\Psi(T;\mathcal A),\quad \Psi_{\mathcal B}\leq\Psi_{\mathcal A},
\end{equation}
note that we do not assume that any of these numbers are finite. Let us discuss the elementary properties of the function $T\mapsto\Psi(T)$.

\begin{proposition}\label{semicont}
The function $\mathcal A \ni T\mapsto\Psi(T)\in[0,+\infty]$ has the following properties:
\begin{enumerate}[\rm (i)]
\item $\Psi(\cdot)$ is bounded from below by 1;
\item $\Psi(\cdot)$ is lower semi-continuous;
\item $\Psi(\cdot)$ attains its minimum on every compact set;
\item $\Psi(\alpha T+\beta I)=\Psi(T)$ for every $\alpha\in\Comp\setminus\{0\}$, $\beta\in\mathbb{C}$, $T\in\mathcal A$;
\item for any $C\in[1,\infty)$,  the set $\{ T\in\mathcal A:  \Psi(T)>C \}$ is open and if it is nonempty then $\lambda_0 I$ belongs to its boundary for any $\lambda_0\in\Comp$;
\item $\Psi(\cdot)$ is not continuous, unless it is constantly equal to one.
\end{enumerate}
\end{proposition}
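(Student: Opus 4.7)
The plan is to work through the six items roughly in the order given, since each one builds on the previous. Item (i) is essentially automatic: testing the defining inequality in \eqref{PsiT} against the constant polynomial $p\equiv 1$ forces any admissible $C$ to satisfy $C\geq\|I\|=1$, so $\Psi(T)\geq1$.

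For (ii) I would fix $C\geq1$ and a sequence $T_n\to T$ in $\mathcal A$ with $\Psi(T_n)\leq C$. For each fixed polynomial $p$ the map $S\mapsto p(S)$ is norm continuous, so $\|p(T_n)\|\to\|p(T)\|$. In parallel, \Cref{Hau} gives Hausdorff convergence $V(T_n)\to V(T)$, and since $|p|$ is continuous on $\mathbb C$ this forces $\sup_{V(T_n)}|p|\to\sup_{V(T)}|p|$. Taking limits in $\|p(T_n)\|\leq C\sup_{V(T_n)}|p|$ yields $\Psi(T)\leq C$, i.e.\ the sublevel sets are closed. Item (iii) is then immediate: a lower semi-continuous function that is bounded below attains its infimum on every compact subset of $\mathcal A$.

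For (iv) I would use polynomial substitution. When $\alpha\neq 0$, setting $q(z):=p(\alpha z+\beta)$ defines a bijection on $\mathbb C[z]$; one has $q(T)=p(\alpha T+\beta I)$, and by \eqref{alphabeta} the sup of $|q|$ over $V(T)$ equals the sup of $|p|$ over $V(\alpha T+\beta I)$. Comparing ratios gives the invariance $\Psi(\alpha T+\beta I)=\Psi(T)$. Item (v) then splits in two: openness of $\{T:\Psi(T)>C\}$ is simply a restatement of (ii); for the boundary claim, given any $T_0$ in the set (so $T_0\neq\lambda_0 I$, since $\Psi(\lambda_0 I)=1\leq C$), invariance (iv) with $\alpha=\epsilon,\beta=\lambda_0$ yields $\Psi(\lambda_0 I+\epsilon T_0)=\Psi(T_0)>C$ for every $\epsilon\neq 0$, while $\lambda_0 I+\epsilon T_0\to\lambda_0 I$ as $\epsilon\to 0$; since $\lambda_0 I$ itself lies outside the set, it is on its boundary. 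Finally, (vi) is a direct corollary: if $\Psi$ is not identically $1$, pick any $T_0$ with $\Psi(T_0)>1$ and any $C\in(1,\Psi(T_0))$; the construction in (v) produces elements arbitrarily close to $\lambda_0 I$ with $\Psi>C$, which together with $\Psi(\lambda_0 I)=1$ blocks upper semi-continuity at $\lambda_0 I$.

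The only step requiring genuine care is the passage to the limit in (ii), whose backbone is the Hausdorff-continuity statement \Cref{Hau} combined with uniform continuity of $|p|$ on bounded sets; everything else reduces to bookkeeping with the polynomial substitution rule and the affine invariance \eqref{alphabeta}. It is also worth noting that (v) and (vi) rely crucially on (iv), which upgrades a single ``bad'' element into a whole line of bad elements accumulating at every scalar multiple of the identity.
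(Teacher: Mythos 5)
Your proposal is correct and follows essentially the same route as the paper: the test-polynomial argument for (i) (you use $p\equiv 1$, the paper uses $p(z)=z$, both trivial), lower semicontinuity via \Cref{Hau} together with norm-continuity of $S\mapsto p(S)$ (you phrase it directly through closed sublevel sets, the paper argues by contradiction with a fixed near-extremal polynomial, but the ingredients are identical), and the same polynomial-substitution/affine-covariance argument for (iv) feeding into the $\lambda_0 I+\epsilon T_0$ construction for (v) and (vi).
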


\begin{proof}Taking the polynomial $p(z)=z$ shows (i).

(ii) Assume the contrary, i.e.\ that $\Psi(\cdot)$ is not lower semicontinuous at some point $T$ in $\mathcal{A}$.  Then there exists  $C\in(0,\infty)$, an $\varepsilon>0$ and a convergent sequence $T_{k}\to T$ such that $\Psi(T)>C+\varepsilon$ and $\Psi(T_{k})\leq C$ for all $k\geq1$. Note that $C$ is finite, regardless of whether $\Psi(T)$ is finite or not.
Further, there  exists a polynomial $p\in\mathbb{C}[z]$ such that $\sup_{V(T)}|p|=1$ and $\|p(T)\|\geq C+\varepsilon$. Now fix a number 
\begin{align*}
0<\delta<\sqrt{\frac{C+\varepsilon}{C}}-1.
\end{align*}
Since $V(T_{k})\to V(T)$ with respect to the Hausdorff metric, there exists an integer $N_{1}\geq1$ such that $\sup_{V(T_{k})}|p|\leq1+\delta$ for all $k\geq N_{1}$. Since $p(T_{k})\to p(T)$, there also is an integer $N_{2}\geq1$ such that $\|p(T_{k})\|\geq(1+\delta)^{-1}\|p(T)\|$ for all $k\geq N_{2}$. Thus, for $k\geq N:=\max\{N_{1},N_{2}\}$ we have
\begin{align*}
\Psi(T_{k})(1+\delta)\geq\Psi(T_{k})\sup_{V(T_{k})}|p|\geq\|p(T_{k})\|\geq\frac{\|p(T)\|}{1+\delta}\geq\frac{C+\varepsilon}{1+\delta}.
\end{align*}
However, this implies that $\Psi(T_{k})>C$ for all $k\geq N$, which is a contradiction.

(iii) follows directly from (ii). 
To see (iv) define the mapping $\Lambda :\Comp[z]\ni p (z)\mapsto p(\alpha z+\beta)\in\Comp[z]$  for fixed $\alpha\neq 0$ and $\beta\in\Comp$ and note that it is bijective. 
Hence, by \eqref{alphabeta},
$$
\Psi(\alpha T+\beta I)= \inf\{C>0: \| \Lambda( p) (T) \| \leq C \sup_{V(T)} |\Lambda(p)|,\ p\in\Comp[\lambda]\setminus\{ 0\}  \}=\Psi(T).
$$

(v)   {Let us fix $C>1$. Then} the set $\mathcal S:=\{ T\in\mathcal A:  \Psi(T)> C \}$ is open as the function $\Psi(\cdot)$ is lower semi-continuous. Fix $\lambda_0\in\Comp$ and take any $T$ with  $\Psi(T)=C_0\in [C,\infty]$. Then $\Psi(\alpha T+ \lambda_0 I)=C_0$ for any $\alpha\neq 0$, hence $\lambda_0 I$ is in the closure of $\mathcal S$. As $\Psi(I)=1$ we have that $\lambda_0I$ is on the boundary of $\mathcal S$. 
Statement (vi) is also a consequence of this reasoning. 
\end{proof}

We provide one more elementary property of the function $\Psi(\cdot)$. Given a Banach space $X$ and its  dual  $X'$ we naturally define the adjoint $T'\in\mathcal B(X')$ of $T\in\mathcal B(X)$. Recall that the map $T\mapsto T'$ is a linear isometry. The following lemma shows that the algebraic numerical range and numerical  {-}range spectral constants  \eqref{PsiT} coincide for $T$ and $T'$.
	\begin{lemma}\label{dualcro}
		Let $X$ be any Banach space. For any $T\in\mathcal{B}(X)$ the following is true:
		\begin{enumerate}
			\item[\rm (i)]	$V(T',\mathcal{B}(X'))=V(T,\mathcal{B}(X))$;
			\item[\rm (ii)] $\Psi(T',\mathcal{B}(X'))=\Psi(T,\mathcal{B}(X))$;
         \item[\rm (iii)] $\Psi_{\mathcal{B}(X)}\leq \Psi_{\mathcal{B}(X')}$, with the inequality being an  equality for reflexive $X$.
		\end{enumerate}
	\end{lemma}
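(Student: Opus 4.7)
The plan is to prove (i) by a Hahn--Banach pull-back argument that uses only the fact that the adjoint map $J\colon\mathcal B(X)\to\mathcal B(X'),\ T\mapsto T'$, is a unital linear isometry (its anti-multiplicativity is irrelevant here). Part (ii) will then reduce to the identity $p(T)'=p(T')$ together with norm preservation, and (iii) will be a short consequence of (ii).

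For the inclusion $V(T,\mathcal{B}(X))\subseteq V(T',\mathcal{B}(X'))$, given $\phi\in\mathcal{B}(X)^{*}$ with $\|\phi\|=1=\phi(I)$, I would define $\tilde\phi_{0}$ on the range $J(\mathcal B(X))$ by $\tilde\phi_{0}(S'):=\phi(S)$. Since $J$ is an isometric injection mapping $I$ to $I$, this is well defined with norm $1$ and $\tilde\phi_0(I)=1$. A Hahn--Banach extension $\tilde\phi\in\mathcal B(X')^{*}$ then still has $\|\tilde\phi\|=1=\tilde\phi(I)$, and $\tilde\phi(T')=\phi(T)$. For the reverse inclusion, given $\psi\in\mathcal B(X')^{*}$ with $\|\psi\|=1=\psi(I)$, the formula $\phi(S):=\psi(S')$ defines a functional on $\mathcal B(X)$ satisfying $|\phi(S)|\le\|S'\|=\|S\|$, i.e.\ $\|\phi\|\le1$, while $\phi(I)=1$ forces equality. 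Hence $\psi(T')=\phi(T)\in V(T,\mathcal B(X))$.

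For (ii), I would note that $(S^{n})'=(S')^{n}$ because the anti-multiplicativity $(AB)'=B'A'$ collapses on powers of a single operator, and combine linearity of adjunction to get $p(T)'=p(T')$ for every $p\in\mathbb C[z]$. Since $J$ is an isometry, $\|p(T')\|=\|p(T)'\|=\|p(T)\|$, and by (i) we also have $\sup_{V(T')}|p|=\sup_{V(T)}|p|$. Both infima in the definition \eqref{PsiT} of $\Psi(T)$ and $\Psi(T')$ thus coincide.

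For (iii), the bound $\Psi_{\mathcal B(X)}\leq\Psi_{\mathcal B(X')}$ is obtained by taking $\sup_{T\in\mathcal B(X)}$ in (ii). For the reverse inequality when $X$ is reflexive, the key point is that, under the canonical identification $X''=X$, the adjunction $\mathcal B(X')\to\mathcal B(X)$, $S\mapsto S'$, is a bijection whose inverse is again adjunction: every $S\in\mathcal B(X')$ equals $R'$ with $R:=S'\in\mathcal B(X)$. Applying (ii) to $R$ gives $\Psi(S,\mathcal B(X'))=\Psi(R,\mathcal B(X))\le\Psi_{\mathcal B(X)}$, and taking the supremum over $S$ yields $\Psi_{\mathcal B(X')}\le\Psi_{\mathcal B(X)}$. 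The only potential pitfall in the whole argument is keeping track of the anti-multiplicative nature of the adjoint, but, as noted, this does not interfere with polynomial functional calculus, so no genuine obstacle arises.
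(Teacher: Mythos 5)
Your proof is correct, and parts (ii) and (iii) are essentially the paper's argument: $p(T)'=p(T')$ plus isometry of the adjoint map, and, for reflexive $X$, surjectivity of $S\mapsto S'$ under the canonical identification $X''\cong X$. The genuine difference is in (i). The paper gets it in one line from the support-function description \eqref{hyperset}--\eqref{r-theta} of the algebraic numerical range, using only the norm identity $\|I_{X'}+\alpha e^{i\theta}T'\|_{\mathcal{B}(X')}=\|I_{X}+\alpha e^{i\theta}T\|_{\mathcal{B}(X)}$, whereas you work directly with states: a norm-one functional $\psi$ on $\mathcal{B}(X')$ with $\psi(I)=1$ pulls back along $S\mapsto S'$ to such a functional on $\mathcal{B}(X)$, and conversely a state $\phi$ on $\mathcal{B}(X)$ transfers to the range of the adjoint map (well defined by injectivity, norm one by isometry, value $1$ at $I_{X'}$ since $I_{X'}=(I_X)'$ lies in that range) and extends by Hahn--Banach without increasing the norm. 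Your route is a bit longer but more self-contained -- it does not invoke \eqref{hyperset} -- and it actually proves something more general: $V(J(T),\mathcal{B})=V(T,\mathcal{A})$ for any unital linear isometry $J$ between unital Banach algebras, of which both the adjoint map here and the subalgebra property \eqref{VBVA} are special cases (anti-multiplicativity is indeed irrelevant, as you note). One small point worth making explicit in (ii): since you phrase $\Psi$ via the infimum in \eqref{PsiT} rather than as a supremum of quotients, the degenerate case $T=\zeta I$ (where $V(T)$ is a singleton and some quotients would be $0/0$) needs no separate treatment, which is why your argument can skip the case distinction the paper makes.
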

	\begin{proof}
			(i) Since for any $\theta\in\mathbb{R}$, $\alpha>0$ we have
		\begin{align*}
			\|I_{X'}+\alpha e^{i\theta}T'\|_{\mathcal{B}(X')}=\|I_{X}+\alpha e^{i\theta}T\|_{\mathcal{B}(X)},
		\end{align*}
		 the result follows from \eqref{hyperset}.
		
		(ii) For $\zeta\in\mathbb{C}$ we trivially have
    \begin{align*}
    \Psi(\zeta I_{X'},\mathcal{B}(X'))=\Psi(\zeta I_{X},\mathcal{B}(X))=1.
    \end{align*} 
    Assume that $T$ is not a scalar multiple of $I_{X}$. By (i) we have 
		\begin{align*}
			\Psi(T',\mathcal{B}(X'))&=\sup_{\substack{f\in\mathbb{C}[z] \\ f\neq0}}\frac{\|f(T')\|_{\mathcal{B}(X')}}{
				\|f\|_{\infty,V(T',\mathcal{B}(X'))}}\\
			&=\sup_{\substack{f\in\mathbb{C}[z] \\ f\neq0}}\frac{\|f(T)'\|_{\mathcal{B}(X')}}{\|f\|_{\infty,V(T',\mathcal{B}(X'))}}\\
			&=\sup_{\substack{f\in\mathbb{C}[z] \\ f\neq0}}\frac{\|f(T)\|_{\mathcal{B}(X)}}{\|f\|_{\infty,V(T,\mathcal{B}(X))}}\\
			&=\Psi(T,\mathcal{B}(X))
		\end{align*}
		as desired. 
  
The inequality in statement (iii) is now obvious. Further, if $X$ is reflexive, then $T\mapsto T'$ is surjective and the equality follows.
	\end{proof}

  {The $\varepsilon$-hull ($\varepsilon>0$) of a compact set $S \subseteq \Comp$ is  defined as
$S_\varepsilon = \{x \in \Comp \mid  \dist(x,S) \leq \varepsilon \} $.}
As it was already mentioned in the introduction, the constant $\Psi_{\mathcal A}$ might be infinite for some algebras.
However, both the $\varepsilon$-hull of the algebraic numerical range $V(T)_{\varepsilon}$ and the disk $(1+\varepsilon)\norm T \overline{\mathbb D}$ are $C$-spectral sets, as the following proposition shows. 
\begin{proposition}\label{prop:epsilonball}
For  every Banach space $X$, any operator $T\in\mathcal{B}(X)$ and $\varepsilon>0$ we have 
\[
\|p(T)\|\leq \left(1+ \frac 1{2\varepsilon}\right) 
\sup_{z\in V(T)_{\varepsilon d}}|p(z)|,\quad p\in\mathbb{C}[z],
\]
where $d$ is the diameter of $V(T)$  {. In particular, if $\norm T\leq 1$, then} $V(T)_1$ is a $2$-spectral set. 
Furthermore, for arbitrary $\norm T$,
\[\|p(T)\|\leq \frac{1+\varepsilon}{\sqrt{\varepsilon(2+\varepsilon)}} 
\sup_{|z|\leq (1+\varepsilon)\|T\|}|p(z)|,\quad p\in\mathbb{C}[z].\]
\end{proposition}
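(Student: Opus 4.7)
The plan is to derive both estimates from the Cauchy integral representation $p(T) = (2\pi i)^{-1}\oint_{\Gamma} p(\lambda)(\lambda I - T)^{-1}\,d\lambda$, but with two different contours tailored to the resolvent estimates available in each case.

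For the first inequality I would take $\Gamma = \partial V(T)_{\varepsilon d}$. Since $V(T)$ is compact, convex and contains $\sigma(T)$ (the degenerate case $T=\lambda_0 I$, where $d=0$, being trivial by \eqref{singleton}), this parallel set is a convex body with rectifiable Jordan boundary whose interior contains $\sigma(T)$. On the contour the resolvent bound \eqref{resineq} gives $\|(\lambda I - T)^{-1}\| \leq (\varepsilon d)^{-1}$. For the perimeter of $V(T)_{\varepsilon d}$, I would combine the planar Steiner identity $P(K_r) = P(K) + 2\pi r$ (valid for compact convex $K$) with the integral-geometric Cauchy formula $P(K) = \int_0^\pi w_K(\theta)\,d\theta \leq \pi\,\operatorname{diam}(K)$; applied to $K = V(T)$ and $r = \varepsilon d$ this yields $P(V(T)_{\varepsilon d}) \leq \pi d(1+2\varepsilon)$. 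Inserting these factors into the Cauchy estimate produces exactly the constant $\frac{1}{2\pi}\cdot\pi d(1+2\varepsilon)\cdot(\varepsilon d)^{-1} = 1 + \frac{1}{2\varepsilon}$. The ``in particular'' statement then follows by choosing $\varepsilon = 1/d$ (so that $V(T)_{\varepsilon d} = V(T)_1$) and observing that $\|T\|\leq 1$ forces $V(T)\subseteq\overline{\mathbb D}$, hence $d\leq 2$ and $1+d/2\leq 2$.

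For the second inequality, running the same argument on the circle $|\lambda| = (1+\varepsilon)\|T\|$ with the elementary bound $\|(\lambda I - T)^{-1}\| \leq (|\lambda|-\|T\|)^{-1}$ only delivers the weaker constant $(1+\varepsilon)/\varepsilon$. To recover the square-root improvement, I would switch to coefficients: writing $p(z) = \sum_{k=0}^n a_k z^k$, estimating $\|p(T)\| \leq \sum_{k=0}^n |a_k|\|T\|^k$, and applying Cauchy--Schwarz with $R := (1+\varepsilon)\|T\|$ to obtain
\begin{equation*}
\sum_{k=0}^n |a_k| \|T\|^k \leq \left(\sum_{k=0}^n |a_k|^2 R^{2k}\right)^{1/2}\!\left(\sum_{k=0}^\infty (\|T\|/R)^{2k}\right)^{1/2}.
\end{equation*}
By Parseval, the first factor equals $\bigl(\tfrac{1}{2\pi}\int_0^{2\pi}|p(Re^{i\theta})|^2\,d\theta\bigr)^{1/2}$ and is therefore bounded by $\sup_{|\lambda|\leq R}|p(\lambda)|$, while the geometric series in the second factor sums to $(1+\varepsilon)^2/(\varepsilon(2+\varepsilon))$, yielding the stated constant after multiplication.

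The only nontrivial ingredient is the perimeter bound for the parallel set, which rests on two classical integral-geometric facts; everything else---rectifiability of the contour, the degenerate case $d=0$, and the application of Parseval---is routine bookkeeping. It is worth noting that the square-root saving in the second estimate reflects an $L^2$ orthogonality on the disc that is structurally unavailable for the convex contour used in the first estimate, which is why the two bounds are proved by substantively different arguments.
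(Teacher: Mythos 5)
Your proof is correct and follows essentially the same route as the paper: a Riesz--Dunford/Cauchy integral estimate over $\partial V(T)_{\varepsilon d}$ combined with the resolvent bound \eqref{resineq} for the first inequality, and a Cauchy--Schwarz/Parseval (Hardy-space) argument on the coefficients for the second, yielding the same constants. The only difference is that you spell out the perimeter estimate (Steiner formula plus Cauchy's formula $P(K)\leq\pi\operatorname{diam}(K)$) and the degenerate case $d=0$, details the paper leaves implicit.
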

\begin{proof}
Since $V(T)_{\varepsilon d}$ is convex, its compact boundary $\partial V(T)_{\varepsilon d}$ is locally the graph of a Lipschitz function and hence it is rectifiable. The first statement follows now directly from the Cauchy integral formula (or, more precisely, from the Riesz-Dunford calculus) and from the estimate \eqref{resineq}.
The fact that $V(T)_1$ is a $2$-spectral set follows from taking $\varepsilon=1/2$, so that $\varepsilon d\leq \varepsilon\cdot 2\norm T\leq 1$.

To see the second statement let $T\in\mathcal{B}(X)$, $\varepsilon>0$ and $p(z)=\sum_{k=0}^{n}a_{k}z^{k}$. Further, let $c=(1+\varepsilon)\|T\|$ and consider 
\[\
\left\|\sum_{k=0}^{n}a_{k}T^{k}\right\|=\left\|\sum_{k=0}^{n}a_{k}c^{k}(\tfrac{1}{c}T)^{k}\right\|\leq \left(\sum_{k=0}^{n}|a_{k}c^{k}|^{2}\right)^{\frac{1}{2}}\left(\sum_{k=0}^{\infty}(1+\varepsilon)^{-2k}\right)^\frac{1}{2}=\|f\|_{H^{2}(\mathbb{D})}C_{\varepsilon},
\]
where $f(z)=\sum_{k=0}^{n}a_{k}c^{k}z^{k}$ for $z\in\mathbb{D}$, the norm $\norm{\cdot}_{H^{2}(\mathbb{D})}$ refers to the norm of the Hardy space $H^{2}(\mathbb{D})$ and $C_{\varepsilon}$ to the constant above. Finally, we use that  $H^{\infty}(\mathbb{D})$, the space of bounded analytic functions on $\mathbb{D}$, embeds continuously into $H^{2}(\mathbb{D})$. More precisely, 
$$
\|f\|_{H^{2}(\mathbb D)}\leq \sup_{|z|<1}|f(z)|=\sup_{|z|<c}|p(z)|,
$$
which finishes the proof. 
\end{proof}
\begin{remark}
    Clearly the first estimate in Proposition \ref{prop:epsilonball} is not sharp, as for 
    $T$ with $V(T)$ being the disk of radius $\norm T$ (e.g.\ when $T$ is the forward shift on $\ell^1$) the second bound is better.
    Furthermore, for large $\varepsilon$ the second bound is also not optimal. For example, taking $\varepsilon=2$ in Proposition \ref{prop:epsilonball} leads to the estimate $\|f(T)\|\leq C\sup_{|z|\leq 3\|T\|}|f(z)|$ with $C=3/\sqrt{8}\geq1.06$ for all polynomials $f$, while it is known that this inequality  holds even with $C=1$, see \cite{KM66,Bohr14,paulsen2022}.
\end{remark}

\section{    Finite dimensional  algebras }\label{s:MA}

  {
Let us recall that an element $A$ of a Banach algebra is called \textit{algebraic} if there exists a polynomial $p$ such that $p(A)=0$. This is equivalent to the fact that $A$ generates a finite-dimensional subalgebra. The central theorem of this section is the following. 
\begin{theorem}\label{findimth}
    For any unital Banach algebra $\mathcal{A}$ the numerical-range spectral constant $\Psi(T, \mathcal{A})$ is finite for all algebraic elements $T$ of $\mathcal{A}$.
\end{theorem}}

For the proof we need the following lemma, 
based on the estimation of the resolvent growth near the algebraic numerical range, see \cite{stampfli1968}.

\begin{lemma}\label{eigbd}
Let $\norm{\cdot}$ be any unital submultiplicative norm on $\mathbb{C}^{n\times n}$. Let $T\in\mathbb{C}^{n\times n}$ and $\lambda\in\sigma(T)$ be given. If $\lambda$ belongs to the boundary $\partial V(T)$, then $\lambda$ is a semisimple eigenvalue.
\end{lemma}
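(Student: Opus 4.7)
The plan is to argue by contradiction, combining the resolvent growth estimate \eqref{resineq} with the local behaviour of the resolvent near a non-semisimple eigenvalue. Suppose $\lambda\in\sigma(T)\cap\partial V(T)$ is \emph{not} semisimple; then the largest Jordan block of $T$ for $\lambda$ has some size $m\geq 2$, which is equivalent to $\mu\mapsto(\mu I-T)^{-1}$ having a pole of order $m$ at $\lambda$.

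The first step is to produce a family $(\mu_t)_{t>0}$ approaching $\lambda$ from outside $V(T)$ along which $\dist(\mu_t,V(T))$ does not decay faster than $|\mu_t-\lambda|$. Because $V(T)$ is compact and convex with $\lambda$ on its boundary, there is a supporting line at $\lambda$: a unit vector $\nu\in\Comp$ such that $\operatorname{Re}\bigl(\bar\nu(z-\lambda)\bigr)\leq 0$ for all $z\in V(T)$. Setting $\mu_t:=\lambda+t\nu$ for $t>0$, an expansion of $|\mu_t-z|^2$ using the supporting inequality gives $|\mu_t-z|\geq t$ for every $z\in V(T)$, while $|\mu_t-\lambda|=t$. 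Hence $\dist(\mu_t,V(T))=t=|\mu_t-\lambda|$, and plugging $\mu_t$ into \eqref{resineq} yields
\[
|\mu_t-\lambda|\cdot\norm{(\mu_t I-T)^{-1}}\;\leq\;1\qquad\text{for all sufficiently small }t>0.
\]

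The second step is to contradict this bound via the Riesz--Dunford (Laurent) expansion of the resolvent near the pole $\lambda$:
\[
(\mu I-T)^{-1}\;=\;\sum_{j=0}^{m-1}(\mu-\lambda)^{-(j+1)}\,N^{j}P_\lambda\;+\;H(\mu),
\]
where $P_\lambda$ is the Riesz projection onto the generalised eigenspace of $\lambda$, $N:=(T-\lambda I)P_\lambda$ is nilpotent of index $m$ (so $N^{m-1}P_\lambda\neq 0$ as a matrix), and $H$ is holomorphic at $\lambda$. Multiplying by $(\mu-\lambda)$ shows that $(\mu-\lambda)(\mu I-T)^{-1}$ has a pole of order $m-1\geq 1$ at $\lambda$ with leading coefficient $N^{m-1}P_\lambda\neq 0$. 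By equivalence of norms on $\mathbb{C}^{n\times n}$, $\norm{N^{m-1}P_\lambda}>0$ in the given norm, and therefore $|\mu_t-\lambda|\cdot\norm{(\mu_t I-T)^{-1}}\to\infty$ as $t\to 0^+$, contradicting the bound from Step~1. It follows that $m=1$, i.e.\ $\lambda$ is semisimple.

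The main potential subtlety lies in Step~2: one needs the resolvent norm to blow up strictly faster than $|\mu-\lambda|^{-1}$ in a general unital submultiplicative norm, not only in the operator norm. This reduces to the observation that the coefficient of the most singular Laurent term is a fixed nonzero matrix, which has strictly positive norm in any norm on $\mathbb{C}^{n\times n}$ by finite-dimensional norm equivalence. The convex-geometric supporting-line construction in Step~1 and the Laurent/Riesz decomposition in Step~2 are both standard.
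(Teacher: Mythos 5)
Your proof is correct and follows essentially the same route as the paper: approach $\lambda$ from outside along points whose distance to $V(T)$ equals their distance to $\lambda$, apply the resolvent bound \eqref{resineq}, and derive a contradiction from the blow-up of the resolvent at a pole of order at least $2$. The differences are only in implementation: you construct the approaching points explicitly via a supporting line (the paper merely asserts such a sequence exists, which convexity of $V(T)$ guarantees), and you quantify the blow-up through the Laurent/Riesz expansion rather than the paper's explicit computation with the Jordan block applied to the vector $e_{s}$.
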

\begin{proof}
Suppose that $\lambda\in\partial V(T)$ is not semisimple. 
  {Then the Jordan normal form of $T$ contains} a Jordan block $J$ corresponding to $\lambda$ of size $s$, where $1 < s \leq n$. Since $\lambda\in\partial V(T)$, we can find a sequence $(\lambda_{i})_{i\in\mathbb{N}}$ in $\mathbb{C}\setminus V(T)$ such that $\lambda_{i}\to\lambda$ and $d_{i}:=d(\lambda_{i},V(T))=|\lambda_{i}-\lambda|$. Let $e_{s}$ be the $s$-th standard basis vector of $\mathbb{C}^{s}$ and   
{let $\norm{\cdot}_1$ denote the operator norm induced by the $\ell_1$-norm on $\Comp^s$}. Note that

\begin{align*}
\|(\lambda_{i}-J)^{-1}e_{s}\|_{1}=
\sum_{k=1}^{s}\frac{1}{d_{i}^{k}}.
\end{align*}
On the other hand, 
\begin{align*}
\|(\lambda_{i}-J)^{-1}e_{s}\|_{1}\leq\|(\lambda_{i}-J)^{-1}\|_{1}\leq C\|(\lambda_{i}-T)^{-1}\|\leq\frac{C}{d_{i}},
\end{align*}
where $C>0$ depends only on the similarity transformation for the Jordan decomposition of $T$ and the equivalence between the norms $\norm{\cdot}_{1}$ and $\norm{\cdot}$. The last inequality follows directly from \eqref{resineq}. We deduce that
\begin{align*}
1+\frac{1}{d_{i}}+\ldots+\frac{1}{d_{i}^{s-1}}\leq C,
\end{align*}
which contradicts $d_{i}\to0$.
\end{proof}

\begin{proof}

  {(of Theorem~\ref{findimth}) Since $T$ generates a finite-dimensional subalgebra, due to \eqref{PsiAB} it is enough to consider only finite-dimensional unital Banach algebras $\mathcal{A}$. Furthermore, any such algebra $\mathcal{A}$ can be isometrically embedded in $\mathcal{B(A)}$. Therefore, once again by \eqref{PsiAB}, it is enough to consider only unital Banach algebras of operators on finite-dimensional spaces. In other words, it is enough to show that for fixed $n\in\mathbb N$, fixed $T\in\Comp^{n,n}$ and fixed unital Banach algebra norm $\norm{\cdot}$ on $\Comp^{n,n}$ one has $\Psi(T, \mathcal B(\Comp^n, \norm\cdot)) < \infty$. 
Note that for some constant $C$ (depending on the norm and hence implicitly on the dimension $n$)  we have that
 $\norm{p(T)}\leq C \norm{p(T)}_2$ for any polynomial $p(z)$. Hence, we subsequently reduce the proof to  showing that}
 $$
\norm{p(T)}_2\leq C_2 \sup_{V(T, \mathcal B(\Comp^n, \norm\cdot))}|p|,\quad p\in\Comp[z],
 $$
 with some constant $C_2$ possibly dependent on $T$. 
 Let $T_J$ denote the Jordan form of $T$ and let $S$ denote the corresponding similarity transformation. We write
 $$
T_J=\text{diag}(\lambda_1,\dots,\lambda_r)\oplus R,\quad \sigma(T)=\{\lambda_1,\dots,\lambda_r\}\cup\sigma(R),
 $$
 where $\lambda_1,\dots,\lambda_r$ are the semisimple eigenvalues written with their multiplicities and $R$ consists of all nontrivial Jordan blocks  (possibly one of   {these} two parts constituting $T_J$ might be void). 
Recall that the algebraic numerical range $V(T, \mathcal B(\Comp^n, \norm\cdot))$ has the property that all eigenvalues of $T$ on its boundary are semisimple by Lemma ~\ref{eigbd}.
Hence, the eigenvalues of $R$ (if there are any) lie inside the interior of $V(T, \mathcal B(\Comp^n, \norm\cdot))$, by  Lemma~\eqref{eigbd}. The boundary $\partial V(T,\mathcal{B}(\Comp^n,\norm{\cdot}))$ is rectifiable. Estimating in a routine way the Cauchy integral formula we receive $\norm{p(R)}\leq C_1 \sup_{V(T, \mathcal B(\Comp^n, \norm\cdot))}|p|$ for any polynomial $p$ and some constant $C_1$, depending on the maximum of the norm of the resolvent of $R$ on $\partial V(T, \mathcal B(\Comp^n, \norm\cdot))$. 
 Therefore,
 $$
\norm{p(T)}_2\leq \norm{S}_2\norm{S^{-1}}_2 \norm{p(T_J)}_2
 $$
 and 
\begin{align*}
    \norm{p(T_J)}_2&=\max ( |p(\lambda_1)|,\dots, |p(\lambda_r)|,\norm{ p(R)}) \\
    &\leq \max(1,C_1)  \sup_{V(T, \mathcal B(\Comp^n, \norm\cdot))}|p|,
\end{align*}
for all $p\in\Comp[z]$, from which we obtain the constant $C_2$. 

\end{proof}

Immediately we provide an example that the numerical  {-}range spectral constant $\Psi_{\mathcal B(\Comp^n,\norm\cdot_p)}$ of the matrix algebra with the $\ell^p$-induced norm depends on the dimension $n$. It should not come as surprise that the `bad' matrix will be the Jordan block
\begin{align*}
	J_{n}:=\begin{bmatrix}
		0&1&&\\
		&0&\ddots&&\\
		&&\ddots&\ddots&\\
		&&&0&1\\
		&&&&0
	\end{bmatrix}\in\mathbb{C}^{n\times n}.
\end{align*}
The following facts we use to derive suitable operator 
norm estimates are    {well known} and   {can be traced} back to the works of Shapiro \cite{shapiro1952extremal} and Rudin \cite{rudin1959some}.
There exists $\Delta>0$ such that for every $n\in \mathbb{N}$ there exists a polynomial $f_{n}=\sum_{k=0}^{n-1}\alpha_{k}z^{k}$ of degree $n-1$, with coefficients $\alpha_k$ all equal to either $1$ or $-1$, and satisfying 
\begin{align}\label{Littlewood}
	|f_{n}(z)|\leq\Delta\sqrt{n},\quad |z|=1,\ n=1,2,\dots.
\end{align}
The best known constant is $\Delta=\sqrt 6$, see \cite{balister2019bounds}. We also refer to 
\cite{Balister2020} for a recent  solution of the related \emph{Littlewood conjecture}  \cite{Littlewood1966}, stating that there even exist such polynomials satisfying the lower bound
$	\delta\sqrt{n}\leq|f_{n}(z)|\leq\Delta\sqrt{n}$ with some constants $0<\delta\leq \Delta$ independent of $n$.

\begin{theorem}\label{Jordan}
Let $p\in[1,\infty]$ with H\"older conjugate $q\in[1,\infty]$. The numerical  {-}range spectral constant \eqref{PsiT} of the Jordan block in the $\ell^p$-induced norm satisfies
\begin{align*}
	\Psi(J_n,\mathcal B(\Comp^n,\norm\cdot_p))\geq  \sup_{\substack{f\in\mathbb{C}[z] \\ f\neq0}}\frac{\|f(J_{n})\|_{p}}{\|f\|_{\infty,\overline{\mathbb{D}}}}\geq \frac{1}{\sqrt{6}}n^{\max\{\frac{1}{p},\frac{1}{q}\}-\frac{1}{2}}
\end{align*}
for all $n\in\mathbb{N}$.
\end{theorem}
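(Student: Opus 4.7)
My plan is to derive the first inequality from $V(J_n)\subseteq\overline{\mathbb{D}}$, and the second by feeding a Shapiro--Rudin polynomial through the functional calculus and estimating the resulting matrix with the help of two test vectors.

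For the first inequality, I would observe that the Jordan block $J_n$ merely left-shifts the standard basis of $\mathbb{C}^n$, so $\|J_n\|_p\leq 1$ for every $p\in[1,\infty]$. The bound $\nu(J_n)\leq\|J_n\|_p$ from \eqref{nuenu} then gives $V(J_n)\subseteq\overline{\mathbb{D}}$, so that $\sup_{V(J_n)}|f|\leq\|f\|_{\infty,\overline{\mathbb{D}}}$ for every polynomial $f$. Substituting into the definition \eqref{PsiT} of $\Psi(J_n)$ yields the first inequality.

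For the second inequality, I would fix a Shapiro--Rudin polynomial $f_n(z)=\sum_{k=0}^{n-1}\alpha_k z^k$ with $\alpha_k\in\{-1,+1\}$ and $\|f_n\|_{\infty,\overline{\mathbb{D}}}\leq\sqrt{6n}$, as recorded in \eqref{Littlewood} with the best known constant $\Delta=\sqrt{6}$. Since $J_n^k$ has $1$'s on the $k$-th superdiagonal and zeros elsewhere, $f_n(J_n)$ is the upper-triangular Toeplitz matrix whose $(i,j)$-entry is $\alpha_{j-i}$ when $j\geq i$ and zero otherwise. The task then reduces to bounding its $\ell^p$-operator norm from below.

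I would do so with two complementary test vectors. Applying $f_n(J_n)$ to $e_n$ produces the column $(\alpha_{n-1},\alpha_{n-2},\ldots,\alpha_0)^{T}$ of $\ell^p$-norm $n^{1/p}$, hence $\|f_n(J_n)\|_p\geq n^{1/p}$. For the other exponent I would use duality: the transpose $f_n(J_n)^{T}$ is a lower-triangular Toeplitz matrix with the same $\pm 1$ entries, applying it to $e_1$ yields a vector of $\ell^q$-norm $n^{1/q}$, and the identity $\|M\|_{p\to p}=\|M^{T}\|_{q\to q}$ upgrades this to $\|f_n(J_n)\|_p\geq n^{1/q}$. Combining gives $\|f_n(J_n)\|_p\geq n^{\max\{1/p,1/q\}}$, and dividing by $\|f_n\|_{\infty,\overline{\mathbb{D}}}\leq\sqrt{6n}$ produces the claim. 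No genuine obstacle arises; the only nontrivial ingredient is the Shapiro--Rudin bound, which is already cited in the excerpt.
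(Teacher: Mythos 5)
Your proposal is correct and follows essentially the same route as the paper: the first inequality from $\nu(J_n)\leq\|J_n\|_p\leq 1$ so that $V(J_n)\subseteq\overline{\mathbb{D}}$, and the second by evaluating the Toeplitz matrix $f_n(J_n)$ of a Shapiro--Rudin polynomial on $e_n$ and, via the duality $\|f_n(J_n)\|_{p}=\|f_n(J_n)^{T}\|_{q}$, on $e_1$, exactly as in the paper's proof. No gaps.
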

\begin{proof}
The first inequality follows from the fact that $\norm{J_n}\leq 1$. To see the second one observe that for any polynomial $f$ of degree $n-1$ with coefficients $\alpha_{0},\alpha_{1},\alpha_{2},\ldots,\alpha_{n-1}$ the polynomial functional calculus of $J_{n}$ is given by
\begin{align*}
    f(J_{n})=
	\begin{bmatrix}
		\alpha_{0}&\alpha_{1}&\alpha_{2}&\cdots&\cdots&\alpha_{n-1}\\
		&\alpha_{0}&\alpha_{1}&&&\vdots\\
		&&\ddots&\ddots&&\vdots\\
		&&&\ddots&\alpha_{1}&\alpha_{2}\\
		&&&&\alpha_{0}&\alpha_{1}\\
		&&&&&\alpha_{0}
	\end{bmatrix}.
\end{align*}
From this it quickly follows that
\begin{align*}
	\|f(J_{n})\|_{p}\geq\|f(J_{n})e_{n}\|_{p}=\begin{cases}
	(\sum_{j=0}^{n-1}|\alpha_{j}|^{p})^{\frac{1}{p}}&p<\infty\\
    \max_{j=0}^{n-1}|\alpha_{j}|&p=\infty
	\end{cases}
	\end{align*}
	and, under the Banach space isomorphism $(\mathbb{C}^{n},\norm{\cdot}_{p})'=(\mathbb{C}^{n},\norm{\cdot}_{q})$,
	\begin{align*}
    \|f(J_{n})\|_{p}=\|f(J_{n})'\|_{q}\geq\|f(J_{n})'e_{1}\|_{q}=\begin{cases}
	(\sum_{j=0}^{n-1}|\alpha_{j}|^{q})^{\frac{1}{q}}&q<\infty\\
    \max_{j=0}^{n-1}|\alpha_{j}|&q=\infty
	\end{cases}.
	\end{align*}
Applying this to the polynomials from  \eqref{Littlewood} 
we conclude that
\begin{align*}
\sup_{\substack{f\in\mathbb{C}[z] \\ f\neq0}}\frac{\|f(J_{n})\|_{p}}{\|f\|_{\infty,\overline{\mathbb{D}}}}\geq\frac{\|f_{n}(J_{n})\|_{p}}{\|f_{n}\|_{\infty,\overline{\mathbb{D}}}}\geq\frac{\max\{n^{\frac{1}{p}},n^{\frac{1}{q}}\}}{\sqrt{6}\sqrt{n}}=\frac{1}{\sqrt{6}}n^{\max\{\frac{1}{p},\frac{1}{q}\}-\frac{1}{2}}
\end{align*}
as desired.
\end{proof}

It remains unknown whether $\Psi_{\mathcal{A}}$ is finite for   {every} matrix algebra $\mathcal A$.

\section{
\texorpdfstring{Infinite-dimensional algebras with finite constant $\Psi_{\mathcal A}$}{Infinite-dimensional algebras with finite constant Ψ(A)}}\label{s:fin}

The numerical  {-}range spectral constant of the algebra $\mathcal B({H})$, where $H$ is a Hilbert space of any infinite dimension, has a special role. Namely, it follows from \cite{Crouzeix2007} that  
\begin{equation}\label{Psicrodef}
\Psi_{\mathcal B( H )} = \Psi_{\rm Cro}  :=\sup_{n\geq 1}  \Psi_{\mathcal B(\Comp^n,\norm{\cdot}_2)}<\infty.
\end{equation}
The universal constant $\Psi_{\rm Cro}$ appearing above is called the Crouzeix constant, see the introduction for a brief review on related results  {. Here} we recall that 
$2\leq\Psi_{\rm Cro}\leq 1+\sqrt 2$. 

It follows from \eqref{PsiAB}  that if $\mathcal A$ is a $C^*$-algebra, then, as a subalgebra of $\mathcal B(H)$ for some Hilbert space $H$, we obtain
\begin{equation}\label{CAC}
\Psi_\mathcal A\leq\Psi_{\rm Cro}.
\end{equation}
Further, there exist several sufficient conditions for embeddability of a given algebra (not necessarily a $*$-algebra) in $\mathcal B(H))$, see e.g. \cite{blecher1995completely,blecher2004operator}, guaranteeing in turn \eqref{CAC}.
Let us now present two instances where the numerical range  {-}spectral constant \eqref{PsiA} can be computed explicitly.

\begin{theorem}\label{th:functions}
Let $X$ be a compact space and let $\mathcal B$ be a Banach algebra. Let $C(X,\mathcal B)$ be the Banach algebra of $\mathcal B$-valued continuous functions on $X$ with the norm  
$\norm f:=\sup_{x\in X}\norm{ f(x)}_{\mathcal{B}}$. Then the corresponding numerical  {-}range spectral constants, defined in  \eqref{PsiA}, satisfy
$$
\Psi_{C(X,\mathcal B)}=\Psi_{\mathcal B},
$$
regardless of whether $\Psi_{\mathcal B}$ is finite or not.\newline
In particular,   for  any unital commutative $C^*$-algebra $\mathcal A$ the constant $\Psi_ \mathcal A$ equals 1.
\end{theorem}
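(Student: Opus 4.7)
The plan is to show the two inequalities $\Psi_{C(X,\mathcal{B})} \geq \Psi_{\mathcal{B}}$ and $\Psi_{C(X,\mathcal{B})} \leq \Psi_{\mathcal{B}}$ separately, and then deduce the commutative $C^*$-algebra statement via Gelfand representation.

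For the lower bound I would embed $\mathcal{B}$ into $C(X,\mathcal{B})$ as constant functions $T\mapsto f_T$ with $f_T(x):=T$. This is an isometric unital algebra homomorphism onto a closed subalgebra $\mathcal{B}_{\mathrm{const}}$ sharing the unit of $C(X,\mathcal{B})$, so \eqref{VBVA} gives $V(f_T,C(X,\mathcal{B}))=V(T,\mathcal{B})$. Since $p(f_T)=f_{p(T)}$ for every $p\in\mathbb{C}[z]$, we also have $\|p(f_T)\|_{C(X,\mathcal{B})}=\|p(T)\|_{\mathcal{B}}$. Combining these two identities, $\Psi(f_T,C(X,\mathcal{B}))=\Psi(T,\mathcal{B})$, and taking the supremum over $T\in\mathcal{B}$ yields the inequality.

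For the reverse bound, the key observation is that each pointwise evaluation $\mathrm{ev}_x\colon C(X,\mathcal{B})\to\mathcal{B}$ is a unital contractive algebra homomorphism. Hence if $\phi\in\mathcal{B}'$ satisfies $\|\phi\|=1=\phi(I_{\mathcal{B}})$, then $\phi\circ\mathrm{ev}_x$ is a functional on $C(X,\mathcal{B})$ with $\|\phi\circ\mathrm{ev}_x\|=1$ and $(\phi\circ\mathrm{ev}_x)(I)=1$, so $V(f(x),\mathcal{B})\subseteq V(f,C(X,\mathcal{B}))$ for every $x\in X$. Using $\|p(f)\|_{C(X,\mathcal{B})}=\sup_{x\in X}\|p(f(x))\|_{\mathcal{B}}$, the pointwise bound $\|p(f(x))\|_{\mathcal{B}}\leq\Psi_{\mathcal{B}}\sup_{z\in V(f(x),\mathcal{B})}|p(z)|$, and the inclusion just established, one obtains $\|p(f)\|\leq\Psi_{\mathcal{B}}\sup_{z\in V(f,C(X,\mathcal{B}))}|p(z)|$. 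This argument is insensitive to whether $\Psi_{\mathcal{B}}$ is finite or not, covering both cases of the statement.

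The final assertion follows at once: by Gelfand, every unital commutative $C^*$-algebra is isometrically $*$-isomorphic to $C(X,\mathbb{C})$ for some compact Hausdorff $X$; and $\Psi_{\mathbb{C}}=1$ because the only unital norm-one functional on $\mathbb{C}$ is the identity, giving $V(z,\mathbb{C})=\{z\}$ and hence $\|p(z)\|_{\mathbb{C}}=\sup_{V(z,\mathbb{C})}|p|$ tautologically. I do not anticipate a serious obstacle here; the only point that merits attention is the monotonicity $V(\pi(a),\mathcal{B})\subseteq V(a,\mathcal{A})$ under unital norm-one maps $\pi\colon\mathcal{A}\to\mathcal{B}$, which is immediate from the definition but is the one structural fact doing all the work.
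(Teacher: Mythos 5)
Your proof is correct and follows essentially the same route as the paper: the upper bound via the functionals $\phi\circ\mathrm{ev}_x$ (equivalently the pointwise estimate $\|p(f)\|=\sup_x\|p(f(x))\|\leq\Psi_{\mathcal B}\sup_{V(f)}|p|$), the lower bound by embedding $\mathcal B$ as constant functions and invoking the subalgebra equalities \eqref{VBVA}/\eqref{PsiAB}, and the commutative $C^*$-case via Gelfand together with $\Psi_{\mathbb C}=1$. No gaps.
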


\begin{proof}
Assume first that $\Psi_{\mathcal B}$ is finite. Let $f\in C(X,\mathcal B)$.   Note that each pair $(x,\phi)$, where $x\in X$ and $\phi\in\mathcal B'$ with $\norm{\phi}=1=\phi(I)$, constitutes a functional $f\mapsto \phi(f(x))$ in the dual space $C(X,\mathcal B)'$. Therefore,
 \begin{eqnarray*}
\norm{p(f)}_{C(X,\mathcal{ B})} &=& \sup_{x\in X} \|p(f) (x)\| \\
&=&  \sup_{x\in X} \|p(f (x))\| \\
&\leq& \Psi_{\mathcal B}\sup_{x\in X} \sup_{\substack{\phi\in \mathcal{ B}'\\ \norm\phi=1=\phi(I)}} |p(\phi(f(x))|
\end{eqnarray*}   
which shows  the inequality $\Psi_{C(X,\mathcal B)}\leq\Psi_{\mathcal B}$. The reverse inequality, and the case $\Psi_{\mathcal B}=\infty$ follow by identifying $\mathcal B$ with constant functions in $C(X,\mathcal B)$ and applying \eqref{PsiAB}.

   The second statement follows from identifying $\mathcal A$ with the algebra $ C(X,\Comp)$ and Proposition \ref{semicont}(i).
\end{proof}

Given a Hilbert space $ H $, the Calkin algebra is defined as the quotient 
$$
\mathcal{C}( H ):=\mathcal{B}( H )/\mathcal{K}( H ),
$$
where $\mathcal K( H )$ denotes the ideal of compact operators in the bounded linear operators $\mathcal{B}(H)$ on $H$. For an operator $T\in\mathcal B(H)$ its equivalence class will be denoted by $[T]$. In the proof below a mapping between two $C^*$-algebras is called an embedding if it is a linear isometry   {that} is multiplicative and preserves the involution and identity.

\begin{theorem}
For a separable   {infinite-dimensional} Hilbert space $H$ we have the following equality between the numerical  {-}range spectral constants \eqref{PsiA}
\begin{equation}\label{PsiPsi}
\Psi_{ \mathcal{C}( H ) }=\Psi_{\rm Cro}
\end{equation}  
Furthermore, for any $T\in\mathcal{B}(H)$ we have 
\begin{equation}\label{CleC}
\Psi( [T],\mathcal C(H) )\leq \Psi ( T+K,\mathcal B(H))
\end{equation}
for some $K\in\mathcal {K}(H)$. 
\end{theorem}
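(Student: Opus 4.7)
The plan is to handle \eqref{PsiPsi} and \eqref{CleC} separately, with \eqref{PsiPsi} requiring both a lower and an upper bound. The upper estimate $\Psi_{\mathcal C(H)} \le \Psi_{\rm Cro}$ is automatic from \eqref{CAC}, since $\mathcal C(H)$ is a unital $C^*$-algebra.

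For the matching lower bound, I would realize every matrix algebra inside $\mathcal C(H)$ via infinite ampliation: fix an identification $H \cong \bigoplus_{k=1}^{\infty}\mathbb C^{n}$ and, for $T \in \mathcal B(\mathbb C^{n})$, set $\tilde T := T \oplus T \oplus \cdots \in \mathcal B(H)$. Two facts need verification. First, $\|p([\tilde T])\|_{\mathcal C(H)} = \|p(T)\|$ for every polynomial $p$: the $\le$ direction is contractivity of the quotient map, and the reverse follows from a standard weakly-null-sequence argument, taking copies of a near-norm-attaining vector for $p(T)$ placed in the $k$-th summand (such copies tend weakly to zero, so no compact perturbation can lower the norm below $\|p(T)\|$). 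Second, $V([\tilde T], \mathcal C(H)) = V(T, \mathcal B(\mathbb C^{n}))$. Here one uses the identification $V([S], \mathcal C(H)) = W_e(S)$ (pull back states on the Calkin algebra through the quotient map), the equality $W_e(\tilde T) = \overline{W(T)}$ (same weakly-null argument, plus the fact that the numerical range of a direct sum is the convex hull of the individual numerical ranges), and the Hilbert-space identity $V(T, \mathcal B(\mathbb C^n)) = \overline{W(T)}$. Combining these yields $\Psi([\tilde T], \mathcal C(H)) = \Psi(T, \mathcal B(\mathbb C^{n}))$; taking the supremum over $T$ and $n$ gives $\Psi_{\mathcal C(H)} \ge \Psi_{\rm Cro}$.

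For \eqref{CleC}, the key step is to find $K \in \mathcal K(H)$ with $\overline{W(T+K)} = W_e(T)$; the existence of such a compact perturbation is a classical result of Fillmore, Stampfli, and Williams. With such $K$ in hand, the identifications above give $V(T+K, \mathcal B(H)) = \overline{W(T+K)} = W_e(T) = W_e(T+K) = V([T], \mathcal C(H))$. Using $[T] = [T+K]$ and contractivity of the quotient map, for every polynomial $p$ one obtains
$$
\|p([T])\|_{\mathcal C(H)} \le \|p(T+K)\|_{\mathcal B(H)} \le \Psi(T+K, \mathcal B(H))\,\sup_{V([T])}|p|,
$$
from which $\Psi([T], \mathcal C(H)) \le \Psi(T+K, \mathcal B(H))$ follows by dividing and taking the supremum over $p$ (the edge case where $[T]$ is a scalar being trivial since $\Psi \ge 1$ always).

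The principal obstacle is precisely the Fillmore--Stampfli--Williams perturbation result in its stronger form: not only that $W_e(T) \subseteq \overline{W(T+K)}$ for every compact $K$, but that equality can be achieved for some single compact $K$. A mere approximation $\overline{W(T+K_\varepsilon)} \subseteq (W_e(T))_\varepsilon$ is not enough, because the sup of $|p|$ over an $\varepsilon$-fattening cannot be controlled by the sup over $W_e(T)$ uniformly in $p$. Everything else amounts to standard bookkeeping between algebraic and essential numerical ranges in $\mathcal B(H)$ and $\mathcal C(H)$.
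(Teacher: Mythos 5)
Your proposal is correct and takes essentially the same route as the paper: the upper bound from \eqref{CAC}, the lower bound by realizing each matrix algebra inside the Calkin algebra via infinite ampliation (the paper packages your weakly-null-sequence verifications into the statement that $T\mapsto[I_{\ell^2}\otimes T]$ is a unital isometric $*$-embedding and then invokes \eqref{VBVA} and \eqref{PsiAB}), and \eqref{CleC} from a single compact $K$ with $V(T+\mathcal{K}(H))=V(T+K)$ followed by the same chain of inequalities. The only caveat is bibliographic: the paper attributes the attainment result $\overline{W(T+K)}=W_e(T)$ for a single compact $K$ to Chui et al.\ (1977) rather than to Fillmore--Stampfli--Williams, whose theorem gives the intersection formula but not, by itself, its attainment.
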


\begin{proof}
  {Let us recall that the quotient of a C*-algebra by a closed *-ideal is again a C*-algebra. Thus the Calkin algebra is a C*-algebra.} Then the inequality $ \Psi_{ \mathcal{C}( H ) }\leq\Psi_{\rm Cro}$ follows from \eqref{CAC} and \eqref{Psicrodef} via the fact that $\mathcal{C}(H)$ can be embedded in  a C*-subalgebra of $\mathcal B(\tilde H)$ for some Hilbert space $\tilde H$. The reverse inequality follows again  from \eqref{CAC} and \eqref{Psicrodef}  and the fact that the algebra $\mathcal B( \Comp^d, \norm{\cdot}_2)$  can be embedded in a $C^*$-subalgebra of the Calkin algebra $\Psi_{ \mathcal{C}( H ) }$. Below we present a simple proof of the latter fact, referring also to \cite{Farah2021} for a rich theory of embedings of $C^*$-algebras into the Calkin algebra. 

 Note that it is enough to embed $\mathcal B( \Comp^d, \norm{\cdot}_2)$ in $\mathcal C(H_0)$ for some separable Hilbert space $H_0$. 
 We define $H_{0}:=\ell^{2}\otimes\mathbb{C}^{d}$ and let $\pi (T)=[I_{\ell^{2}}\otimes T]\in\mathcal{C}(H_0)$. 
 Let $P_k$ be an orthogonal projection on the first $k$ basis vectors of $\ell^2$ and let $Q_k=(I_{\ell^2} - P_k)\otimes I_{\Comp^d}$. 
By Proposition 6 of \cite{Muller2010} we have that 
$$
\norm{ \pi(T) }_{\mathcal{C}(H_0)}=   \lim_{k\to\infty} \norm{ Q_k(I_{\ell^2}\otimes T) Q_k }_{\mathcal{B}(H_0)}=\norm{T}_{\mathcal B(H_0)}.
$$
Hence, the mapping $\pi$ is an isometry, it is also clearly linear, multiplicative, and preserves the adjoint and identity.

Let us show now \eqref{CleC}.
By \cite{Chui1977} (see also \cite{Muller2010}) there exists
	$K\in\mathcal{K}(H)$ such that $V(T+\mathcal{K}(H))=V(T+K)$. It follows that
	\begin{align*}
		\|p([ T] )\|_{\mathcal C(H)}&=\|[ p(T)]\|_{\mathcal C(H)}\\
		&\leq\|p(T+K)\|_{\mathcal{B}(H)}\\
		&\leq\Psi(T+K)\cdot {\textstyle \sup_{V(T+K)}|p|}\\
		&=\Psi(T+K)\cdot {\textstyle\sup_{V([T] )}|p|}
	\end{align*}
	for any polynomial $p$.
\end{proof}

\section{
\texorpdfstring{Algebras with infinite constant $\Psi_{\mathcal A}$}{Algebras with infinite constant Ψ(A)}}\label{sec:lp}

Below we  show that the numerical  {-}range spectral constant $\Psi(\cdot)$ (see \eqref{PsiT}) can attain the value $\infty$ as soon as we step away from $C^*$-algebras or matrix Banach algebras. We use the classical left- and right-shift operators and compute their algebraic numerical range $V(T)$ for completeness. 

\begin{theorem}\label{LR}
Let $p\in[1,\infty]$. The left-shift $L$ and right-shift $R$ satisfy:
\begin{enumerate}
    \item[\rm (i)]$V(L,\mathcal{B}(\ell^{p}))=V(R,\mathcal{B}(\ell^{p}))=\overline{\mathbb{D}}$;
    \item[\rm (ii)]$\Psi(L,\mathcal{B}(\ell^{p}))=\Psi(R,\mathcal{B}(\ell^{p}))=\begin{cases}
1&p=2\\
\infty&p\neq2
\end{cases}$.
\end{enumerate}
\end{theorem}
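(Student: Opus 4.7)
\textbf{Plan for Theorem~\ref{LR}.} My strategy is to treat (i) via the inclusion of the spectrum in the numerical range, and (ii) by distinguishing $p=2$ (von Neumann) from $p\neq 2$ (Rudin--Shapiro combined with duality).

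For part (i), both $L$ and $R$ are contractions, so $V(T)\subseteq\overline{\mathbb{D}}$. For the reverse inclusion for $L$, I would exhibit every $\lambda\in\mathbb{D}$ as an eigenvalue via the geometric sequence $(1,\lambda,\lambda^2,\dots)$, which lies in $\ell^p$ for $|\lambda|<1$ (and already in $\ell^\infty$ for $|\lambda|\leq 1$). Then $\mathbb{D}\subseteq\sigma(L)\subseteq V(L)$, and closedness of $V(L)$ forces $\overline{\mathbb{D}}\subseteq V(L)$. The corresponding statement for $R$ would then follow from \Cref{dualcro}(i), since $R$ on $\ell^q$ is the Banach adjoint of $L$ on $\ell^p$ when $1/p+1/q=1$; the non-reflexive endpoints are covered by applying the lemma to operators on $\ell^1$, whose dual is $\ell^\infty$.

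For part (ii) at $p=2$ the shifts are Hilbert space contractions, so von Neumann's inequality gives $\|f(R)\|\leq\sup_{\overline{\mathbb{D}}}|f|$; the same bound for $L=R^*$ follows by conjugating the coefficients of $f$. Combined with the general lower bound $\Psi(\cdot)\geq 1$ from \Cref{semicont}(i), we get $\Psi(L)=\Psi(R)=1$.

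For $p\neq 2$, the main tool is the Rudin--Shapiro polynomial $f_n$ from \eqref{Littlewood}, of degree $n-1$ with $\pm 1$ coefficients and $\sup_{\overline{\mathbb{D}}}|f_n|\leq\sqrt{6n}$. For $p\in[1,2)$ I would test $f_n$ on a single basis vector: $f_n(R)e_1=\sum_{k=0}^{n-1}\alpha_k e_{k+1}$ has $\ell^p$-norm $n^{1/p}$, and similarly $\|f_n(L)e_N\|_p=n^{1/p}$ for $N\geq n$. The ratio to $\sup_{\overline{\mathbb{D}}}|f_n|$ then grows like $n^{1/p-1/2}/\sqrt{6}\to\infty$. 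The cases $p\in(2,\infty]$ would be reduced to $p\in[1,2)$ via \Cref{dualcro}(ii), since $R$ on $\ell^p$ is the Banach adjoint of $L$ on the conjugate space $\ell^q$ (and vice versa); the case $p=\infty$ is handled by applying the lemma to the shifts on $\ell^1$.

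I expect the main obstacle to be organizing the $p$-dependence without duplication: the direct lower bound via one basis vector scales favorably only for $p<2$, and it is the duality lemma that makes $p>2$ tractable without an essentially new computation. A more symmetric alternative would be to test $f_n(L)$ on $x=\sum_{k=0}^{n-1}\bar\alpha_k e_{k+1}$, for which the first coordinate of $f_n(L)x$ equals $\sum_k|\alpha_k|^2=n$ while $\|x\|_p=n^{1/p}$, yielding $\|f_n(L)\|_p\geq n^{1/q}$ and handling $p>2$ directly. Either route isolates $p=2$ as the sole compatible exponent via the mismatch between the basis-vector growth $n^{\max(1/p,1/q)}$ and the Rudin--Shapiro bound $\sqrt n$.
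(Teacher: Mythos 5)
Your proposal is correct, and while it shares the overall architecture of the paper's proof (reduce $R$ to $L$ via \Cref{dualcro}, von Neumann's inequality for $p=2$, Rudin--Shapiro polynomials \eqref{Littlewood} for $p\neq 2$), it differs in two details worth noting. For part (i) the paper establishes $\overline{\mathbb{D}}\subseteq V(L,\mathcal{B}(\ell^p))$ by a support-function computation, estimating $r_\theta$ from \eqref{r-theta} with the test vectors $x_{\theta,n}=n^{-1/p}(1,e^{-i\theta},\dots,e^{-(n-1)i\theta},0,\dots)$; you instead observe that every $\lambda\in\mathbb{D}$ is an eigenvalue of $L$ with eigenvector $(1,\lambda,\lambda^2,\dots)\in\ell^p$, so $\mathbb{D}\subseteq\sigma(L)\subseteq V(L)$ and compactness of $V(L)$ finishes the job. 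This is a genuinely simpler and perfectly valid argument, since the spectral inclusion $\sigma(T)\subseteq V(T)$ is available from the preliminaries. For part (ii) with $p\neq 2$, the paper compresses $L$ to the finite Jordan block via $P_nf(L)Q_n=f(J_n)$ and invokes \Cref{Jordan}, whereas you evaluate $f_n(R)e_1$, $f_n(L)e_N$ (and, in the symmetric variant, $f_n(L)$ applied to $\sum_k\bar\alpha_k e_{k+1}$) directly in $\ell^p$; the underlying coefficient estimates are the same as in \Cref{Jordan} (hitting a basis vector and, for the conjugate exponent, either dualizing or using the correlation trick giving the first coordinate $n$), so your route is self-contained but essentially re-derives that lemma in infinite dimensions, while the paper's compression has the advantage of reusing \Cref{Jordan}, which it needs anyway for the dimension-dependence of matrix algebras. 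Your handling of the non-reflexive endpoints (applying the duality lemma to the shifts on $\ell^1$) and your use of $V(L)=\overline{\mathbb{D}}$ to convert the Rudin--Shapiro lower bounds into $\Psi=\infty$ are both correct, so no gaps remain.
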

\begin{proof}
Suppose that $p\in[1,\infty)$ with H\"older conjugate $q\in(1,\infty]$, then $(\ell^{p})'=\ell^{q}$ (i.e.\ $\ell^{q}$ is the dual space of $\ell^{p}$) with $R'=L$ and using Lemma \ref{dualcro} we deduce that
\begin{align*}
V(R,\mathcal{B}(\ell^{p}))&=V(R',\mathcal{B}((\ell^{p})'))=V(L,\mathcal{B}(\ell^{q})),\\
\Psi(R,\mathcal{B}(\ell^{p}))&=\Psi(R',\mathcal{B}((\ell^{p})'))=\Psi(L,\mathcal{B}(\ell^{q})).
\end{align*}
For $p=\infty$ we have $\ell^{\infty}=(\ell^{1})'$ with $R=L'$ and therefore
\begin{align*}
V(R,\mathcal{B}(\ell^{\infty}))&=V(L',\mathcal{B}((\ell^{1})'))=V(L,\mathcal{B}(\ell^{1})),\\
\Psi(R,\mathcal{B}(\ell^{\infty}))&=\Psi(L',\mathcal{B}((\ell^{1})'))=\Psi(L,\mathcal{B}(\ell^{1})).
\end{align*}
Hence it suffices to prove both (i) and (ii) only for the left-shift $L$.

(i) Assume $p\in[1,\infty)$. Since $\nu(L,\mathcal{B}(\ell^{p}))\leq\|L\|_{p}=1$, we have $V(L,\mathcal{B}(\ell^{p}))\subseteq\overline{\mathbb{D}}$. To see that the reverse inclusion holds, define for each $\theta\in\mathbb{R}$ and $n\in\mathbb{N}$ the vector
\begin{align*}
x_{\theta,n}:=n^{-\tfrac{1}{p}}(1,e^{-i\theta},e^{-2i\theta},\ldots,e^{-(n-1)i\theta},0,\ldots)\in\ell^{p}
\end{align*}
and observe that
\begin{align*}
\lim_{\alpha\downarrow0}\frac{\|I_{\ell^{p}}+\alpha e^{i\theta} L\|_{p}-1}{\alpha}&\geq\lim_{\alpha\downarrow0}\frac{\|(I_{\ell^{p}}+\alpha e^{i\theta} L)x_{\theta,n}\|_{p}-1}{\alpha}\\
&=\lim_{\alpha\downarrow0}\frac{n^{-\frac{1}{p}}(1+(n-1)(1+\alpha)^{p})^{\frac{1}{p}}-1}{\alpha}=\frac{n-1}{n},
\end{align*}
which after letting $n\to\infty$ yields $\sup\operatorname{Re}e^{i\theta} V(L,\mathcal{B}(\ell^{p}))\geq1$ by the equality \eqref{r-theta}. The case $p=\infty$ follows from a similar but more direct argument using the vector $x_{\theta}:=(1,e^{-i\theta},0,\ldots)$ for $\theta\in\mathbb{R}$.

(ii) The case $p=2$ follows directly from von Neumann's inequality as $V(L,\mathcal{B}(\ell^{2}))=\overline{\mathbb{D}}$ and $\|L\|_{2}=1$. Assume $p\in[1,\infty]\setminus\{2\}$ and let $n\in\mathbb{N}$ be arbitrary. Let $P_{n}\colon\ell^{p}\to(\mathbb{C}^{n},\norm{\cdot}_{p})$ and $Q_{n}\colon(\mathbb{C}^{n},\norm{\cdot}_{p})\to\ell^{p}$ be defined by
\begin{align*}
    P_{n}(x_{1},\ldots,x_{n},x_{n+1},\ldots):=(x_{1},\ldots,x_{n}),\qquad Q_{n}(x_{1},\ldots,x_{n}):=(x_{1},\ldots,x_{n},0,\ldots).
\end{align*}
Clearly, $P_{n}Q_{n}=I_{\mathbb{C}^{n}}$ and $P_{n}LQ_{n}=J_{n}$ and $\|P_{n}\|=\|Q_{n}\|=1$. For every $f\in\mathbb{C}[z]$ we infer
\begin{align*}
P_{n}f(L)Q_{n}=f(P_{n}LQ_{n})=f(J_{n})
\end{align*}
and therefore $\|f(L)\|_{p}\geq\|f(J_{n})\|_{p}$. It follows that \begin{align*}
	\Psi(L,\mathcal{B}(\ell^{p}))\geq\sup_{\substack{f\in\mathbb{C}[z] \\ f\neq0}}\frac{\|f(J_{n})\|_{p}}{\|f\|_{\infty,\overline{\mathbb{D}}}}\geq\frac{1}{\sqrt{6}} n^{\max\{\frac{1}{p},\frac{1}{q}\}-\frac{1}{2}},
\end{align*}
by Theorem \ref{Jordan}. Thus $p\neq2$ implies $\Psi(L,\mathcal{B}(\ell^{p}))=\infty$ as desired. 
\end{proof}

Theorem~\ref{LR} says that both shifts  on $\ell^{p}$  ($p\neq2$) are examples of operators with operator norm $1$ and numerical  {-}range spectral constant $\infty$, the latter being due to the fact that they are both not polynomially bounded.  
In the next example we construct a polynomially bounded operator $T$ on a Banach space $X$ with $\|T\|=1$ and $\Psi(T)=\infty$.
\begin{example}\label{ex:polybddinfspectral}
    Consider the left-shift operator $L\colon\ell^{p}\to\ell^{p}$ for $p\neq2$. Endow $\mathbb{C}^{2}$ with the standard Hilbert norm and consider the algebraic direct sum
\begin{align*}
	X:=\mathbb{C}^{2}\oplus\ell^{p}.
\end{align*}
Equip $X$ with a Banach norm so that the induced operator norm on $\mathcal{B}(X)$ satisfies $\|A\oplus B\|=\max\{\|A\|_{2},\|B\|_{p}\}$ for all $A\in\mathcal{B}(\mathbb{C}^{2})$ and $B\in\mathcal{B}(\ell^{p})$. Consider the matrix
\begin{align*}
	E:=\begin{bmatrix}
		0&1\\
		0&0
	\end{bmatrix}
\end{align*}
and define the operator from $X$ to $X$ as
\begin{align*}
T:=E\oplus\tfrac{1}{2}L.
\end{align*}
It is clear that $\|T\|=1$. 
Since $V(E,\mathcal{B}(\mathbb{C}^{2}))=V(\tfrac{1}{2}L,\mathcal{B}(\ell^{p}))=\tfrac{1}{2}\overline{\mathbb{D}}$, it quickly follows that
\begin{align*}
\sup \operatorname{Re} e^{i\theta}V(T,\mathcal{B}(X))&=\lim_{\alpha\downarrow0}\frac{\|I_{X}-\alpha e^{i\theta} T\|-1}{\alpha}\\
&=\max\Big\{\lim_{\alpha\downarrow0}\frac{\|I_{\mathbb{C}^{2}}-\alpha e^{i\theta} E\|-1}{\alpha},\lim_{\alpha\downarrow0}\frac{\|I_{\ell^{p}}-\alpha e^{i\theta}\tfrac{1}{2}L\|-1}{\alpha}\}
=\frac{1}{2}
\end{align*}
for all $\theta\in\mathbb{R}$ and therefore $V(T,\mathcal{B}(X))=\tfrac{1}{2}\overline{\mathbb{D}}$ as well. 

Let us now prove that $T$ is polynomially bounded. By the von Neumann inequality we have $\|f(E)\|_{2}\leq\sup_{\overline{\mathbb{D}}}|f|$ for all polynomials $f$, while by Proposition \ref{prop:epsilonball} we have $\|f(\frac{1}{2}L)\|_{p}\leq \frac{2\sqrt 3}3\sup_{\overline{\mathbb{D}}}|f|$ for all polynomials $f$. We deduce that
\begin{align}\label{2sqrt3}
	\|f(T)\|=\max\{\|f(E)\|_{2},\|f(\tfrac{1}{2}L)\|_{p}\}\leq \frac{2\sqrt 3}3\sup_{\overline{\mathbb{D}}}|f|
\end{align}
for all polynomials $f$.

Finally, we show that $\Psi(T,\mathcal{B}(X))=\infty$. Since $V(\tfrac{1}{2}L,\mathcal{B}(\ell^{p}))=\tfrac{1}{2}\overline{\mathbb{D}}$ and $\Psi(\tfrac{1}{2}L,\mathcal{B}(\ell^{p}))=\infty$, there exists for each positive integer $n$ some polynomial $f_{n}$ such that
\begin{align*}
	\|f_{n}(\tfrac{1}{2}L)\|_{p}\geq n\sup_{\tfrac{1}{2}\overline{\mathbb{D}}}|f_{n}|.
\end{align*}
From this we deduce that
\begin{align*}
	\|f_{n}(T)\|\geq\|f_{n}(\tfrac{1}{2}L)\|_{p}\geq n\sup_{\tfrac{1}{2}\overline{\mathbb{D}}}|f_{n}|=n\sup_{V(T,\mathcal{B}(X))}|f_{n}|
\end{align*}
for all $n\in\mathbb{N}$ and the claim follows.  It remains an open question whether the constant $\frac{2\sqrt{3}}3$ in \eqref{2sqrt3} is optimal.   {In particular, it is unknown whether} there exists an operator $T$ of norm 1 which is polynomially bounded with constant 1 but with $\Psi(T)=\infty$.
\end{example}

We now turn our attention  to  combinatorial Banach spaces.
We will show that for a large class of these spaces the  universal spectral constant is infinite. Our idea is based on the \emph{spreading property}, hence it will include important examples such as the Schreier space and the Tsirelson space, see, e.g.  \cite{borodulin2024zoo} and the references therein.    
Let $\mathcal{F}$ be   {a} family of subsets of the positive integers $\mathbb{N}$, satisfying the following properties;
\begin{enumerate}
	\item every  $i \in \mathbb{N} $ belongs to some $F \in \mathcal{F}$;\label{cB0}
		\item if $\{l_1,l_2,\dots,l_n\} \in \mathcal{F}$ and $l_i \leqslant k_i$ for $k_{i}\in\mathbb{N}$ and all $i=1,\dots,n$, then  $\{k_1,k_2,\dots,k_n\}  \in \mathcal{F}$, (\textit{spreading property});\label{cB1}
        \item for each $ n\geq 1$ there exists $ F \in \mathcal{F}$ such that $|F|\geq n$.\label{cB2}
    \end{enumerate}
    See also \cite{Gowers}.
Consider the following norm on the space  $c_{00}$  of finitely supported sequences:
\begin{equation}
    \|x\|_{\mathcal{S}}:= \sup_{F \in \mathcal{F}} \sum_{i \in F} |x_i|,\quad x=(x_i)_{i=1}^\infty\in c_{00}.
\end{equation}
We define the combinatorial Banach space $\mathcal{S}$ as the completion of $c_{00}$ with respect to the above norm.
  Below we show  that the numerical  {-}range spectral constant \eqref{PsiA} of the algebra $\mathcal{B}(\mathcal S)$ is infinite.

\begin{theorem}\label{combinTh}
 Let $\mathcal S$ be a combinatorial Banach space, defined as above, satisfying \eqref{cB0}--\eqref{cB2}  {. Then}  $\Psi_{\mathcal{B}(\mathcal S)}=\infty$.
\end{theorem}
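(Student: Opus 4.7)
The plan is to exhibit a contraction on $\mathcal{S}$ whose numerical range spectral constant is infinite; the conclusion $\Psi_{\mathcal{B}(\mathcal{S})}=\infty$ then follows at once from the definition \eqref{PsiA}. I would use the left-shift
\[
L\colon\mathcal{S}\to\mathcal{S},\qquad L(x_1,x_2,\ldots):=(x_2,x_3,\ldots),
\]
and adapt the blueprint of Theorem~\ref{LR}(ii): on arbitrarily long consecutive blocks of basis vectors, $L$ will behave exactly as the Jordan nilpotent $J_m$, so the Rudin--Shapiro estimate \eqref{Littlewood} (used as in Theorem~\ref{Jordan} with $p=1$) produces polynomials witnessing $\Psi(L)=\infty$.

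First I would check that $L$ is a contraction on $\mathcal{S}$: for any $F\in\mathcal{F}$, the spreading property \eqref{cB1} gives $F+1\in\mathcal{F}$, whence
\[
\sum_{i\in F}|(Lx)_i|=\sum_{j\in F+1}|x_j|\le\|x\|_{\mathcal{S}},
\]
so $\|L\|\le 1$ and in particular $V(L)\subseteq\overline{\mathbb{D}}$. The main structural step is then to use \eqref{cB1}--\eqref{cB2} to produce arbitrarily long consecutive blocks in $\mathcal{F}$. Given $n\ge 1$, I would pick $F=\{i_1<\cdots<i_m\}\in\mathcal{F}$ with $m\ge n$ via \eqref{cB2}, set $M:=\max_{1\le j\le m}(i_j-j)$ so that $M+j\ge i_j$ for each $j$, and apply spreading to the matching $i_j\mapsto M+j$ to conclude that $C_m:=\{M+1,\ldots,M+m\}\in\mathcal{F}$.

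With such a block $C_m$ in hand I would invoke the Rudin--Shapiro polynomial $f_m(z)=\sum_{k=0}^{m-1}\alpha_k z^k$ from \eqref{Littlewood}, with $\alpha_k\in\{-1,+1\}$ and $\sup_{\overline{\mathbb{D}}}|f_m|\le\sqrt{6m}$. Since $L^k e_{M+m}=e_{M+m-k}$ for $0\le k\le m-1$, the vector
\[
f_m(L)e_{M+m}=\sum_{k=0}^{m-1}\alpha_k e_{M+m-k}
\]
has modulus $1$ on every coordinate in $C_m$ and vanishes elsewhere. Testing the $\mathcal{S}$-norm against $F=C_m\in\mathcal{F}$ yields $\|f_m(L)e_{M+m}\|_{\mathcal{S}}\ge|C_m|=m$; combined with $\|e_{M+m}\|_{\mathcal{S}}=1$ (from \eqref{cB0}) and $V(L)\subseteq\overline{\mathbb{D}}$, this gives
\[
\Psi(L,\mathcal{B}(\mathcal{S}))\ge\frac{\|f_m(L)\|}{\sup_{V(L)}|f_m|}\ge\frac{m}{\sqrt{6m}}=\sqrt{\tfrac{m}{6}},
\]
which is unbounded as $n$ (and therefore $m$) grows. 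The only genuine obstacle is the combinatorial step above: spreading only permits moving elements of an $\mathcal{F}$-set to the right, so one must carefully match the ordered list $i_1<\cdots<i_m$ with the target positions $M+1<\cdots<M+m$; the rest of the argument is routine.
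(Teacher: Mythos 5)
Your proof is correct, and it follows the same core mechanism as the paper: use the spreading property \eqref{cB1} together with \eqref{cB2} to produce an arbitrarily long interval of consecutive integers in $\mathcal{F}$, apply a Rudin--Shapiro polynomial from \eqref{Littlewood} to a shift-type contraction, evaluate on a unit basis vector so that the image has unimodular entries exactly on that interval, and compare with $\sup_{\overline{\mathbb{D}}}|f_m|\leq\sqrt{6m}$ using $V\subseteq\overline{\mathbb{D}}$, which follows from \eqref{nuenu}. The difference lies in the operator: the paper works with a family of cut right-shifts $S_n=P_n\circ R$ (bounded via an $\ell^1$-comparison on the block, with $V(S_n)=\overline{\mathbb{D}}$ computed exactly from \eqref{circleset}), whereas you use the single left shift $L$, whose contractivity on $\mathcal{S}$ is precisely the spreading property ($F+1\in\mathcal{F}$), and you only need the inclusion $V(L)\subseteq\overline{\mathbb{D}}$ rather than an exact description of the numerical range. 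Your variant is slightly leaner and buys a marginally stronger conclusion: one fixed contraction on $\mathcal{S}$ with $\Psi(L,\mathcal{B}(\mathcal{S}))=\infty$, in the spirit of Theorem~\ref{LR}(ii), rather than a sequence of operators with unbounded constants; your matching $i_j\mapsto M+j$ with $M:=\max_j(i_j-j)$ is the same combinatorial step the paper uses to pass from $(k_1,\dots,k_n)$ to the interval $(k_n,k_n+1,\dots,k_n+n-1)$, and all the remaining verifications (boundedness, $\|e_{M+m}\|_{\mathcal{S}}=1$ via \eqref{cB0}, the norm lower bound $m$) are carried out correctly.
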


\begin{proof}
Let us fix a number $k \in \mathbb{N}$. By property \ref{cB2} of a combinatorial Banach space,  there exists  $n\geq k$ such that there exists a subset  $(k_1,\dots, k_n) \in \mathcal{F}$. Thanks to the spreading property we have   {that}
\begin{equation}\label{spread}
(k_n, k_n +1, \dots, k_n +n-1)\in\mathcal F.
\end{equation}
Let $P_n$ denote the projection onto the coordinates $k_n+1,\dots, k_n+n$ and $R$ be the right shift, both defined on $c_{00}$. Define the following  linear operator
$$
S_n = P_n \circ R,\quad S_n( (x_{j})_{j=1}^\infty)=(\underbrace{0,\dots,0}_{k_n}, x_{k_n},\dots,x_{k_n+n-1},0,0,\dots ).
$$
Observe that  $\norm{S_n}\leq 1$, hence, it extends to a bounded operator of norm not greater than 1 on the whole space $\mathcal S$. Indeed,  we have
$$
\norm{S_nx}_\mathcal{S} \leq \norm{S_nx}_{\ell_1}=\sum_{i=0}^{n-1}|x_{k_n+i}|\leq \norm{x}_{\mathcal S}, 
$$
where the last inequality follows due to \eqref{spread}.  

Further, observe that  $\norm{S_n+\lambda I}_\mathcal{S} \leq |\lambda| +1$ for $\lambda\in\Comp$.   {In fact, we have equality. To see this, take}  $x= e_{k_n}=(0, \dots, 0,1, 0,\dots)$ (unit on the $k_n$-th position) and note that it is a unit vector, due to \eqref{spread}. Hence,
$$
\norm{S_n+\lambda I}_{\mathcal S} =|\lambda| +1,\quad \lambda\in\Comp,
$$
again thanks to \eqref{spread}.
From this, together with \eqref{circleset}, we obtain that 
$ V(S_n)$
is the closed unit disk.

Let $f_n$ be the polynomials as in \eqref{Littlewood}.  From the form of $S_n$ we see that
$$
f_{n+1}(S_n)e_{k_n}  = (\underbrace{0,\dots, 0}_{k_n-1}, \alpha_0, \alpha_1,  \dots, \alpha_{n-1}, 0, 0, \dots),
$$
where $f_{n+1}(z)=\alpha_{0}+\alpha_{1}z+\dots+\alpha_{n}z^{n}$ and $\alpha_j\in\{-1,1\}$ ($j=0,\dots,n$).
Hence, using \eqref{spread} for the final time,
$$
\norm{f_{n+1}(S_n)}_\mathcal{S} \geq n,
$$
while $\sup_{z\in \overline{\mathbb{D}}}|f_{n+1}(z)| \leq \sqrt{6} \sqrt{n+1}$, which shows that $\Psi_{\mathcal{B}(\mathcal S)}=\infty$.

\end{proof}

\begin{remark}
There are several possibilities to extend the results of the current section using similar methods. First,  Theorem~\ref{combinTh} can be easily extended to higher order spaces, cf. \cite{antunes2021geometry,beanland2019extreme}.
   Second, one can show that for the algebra $\mathcal B(C(K))$  the spectral constant of the numerical range is infinite, under mild assumptions on $K$, in particular covering Theorem~\ref{LR} for $p=\infty$. We refrain from doing this, and in the subsequent section we concentrate on the analysis of the case where the numerical range is not necessarily a disk.
\end{remark}

\section{
\texorpdfstring{$\ell^1$-induced norm}{l1-induced norm}}\label{s:l1}

In this section we give explicit bounds of the numerical  {-}range spectral constant  $\Psi_{\mathcal A}$, see \eqref{PsiA}, for the algebra  of $2 \times 2$ matrices   {with the operator norm $\|\cdot\|_1$ induced by the $\ell^1$-norm on $\mathbb{C}^2$}. 
\begin{theorem}\label{13} The following inequalities hold:
    $1.1<\Psi_{\mathcal B(\Comp^{2}, \norm\cdot_1) }\leq 13$.
\end{theorem}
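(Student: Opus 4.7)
Both inequalities rest on first establishing an explicit description of $V(T)$ for a general $T=\begin{bmatrix}a&b\\c&d\end{bmatrix}$ in $\mathcal{B}(\mathbb{C}^2,\|\cdot\|_1)$. Since $\|e^{-i\theta}T+tI\|_1=\max(|e^{-i\theta}a+t|+|c|,\,|b|+|e^{-i\theta}d+t|)$ and each $|\alpha+t|-t$ is nonincreasing in $t\geq 0$ with limit $\mathrm{Re}\,\alpha$, the infimum in \eqref{r-theta} is attained in the limit, giving
\[
r_\theta(T)=\max\bigl(\mathrm{Re}(e^{-i\theta}a)+|c|,\,\mathrm{Re}(e^{-i\theta}d)+|b|\bigr),
\]
which is precisely the support function of $\overline{D(a,|c|)}\cup\overline{D(d,|b|)}$. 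Plugging into \eqref{hyperset} yields
\[
V(T)=\mathrm{conv}\bigl(\,\overline{D(a,|c|)}\cup\overline{D(d,|b|)}\,\bigr),
\]
a single closed disk or a ``stadium'' of two circular caps joined by tangent line segments. This is the nice representation alluded to in the introduction. A key consequence is $\nu(T)=\max(|a|+|c|,\,|d|+|b|)=\|T\|_1$; that is, the inequality \eqref{nuenu} is actually an equality in this algebra, so no factor of $e$ is lost.

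For the lower bound, take the rank-one idempotent $T_M=\begin{bmatrix}1&M\\0&0\end{bmatrix}$ with $M>1$; the formula above gives $V(T_M)=\overline{D(0,M)}$ and $\sigma(T_M)=\{0,1\}$. Since $T_M^2=T_M$, Lagrange interpolation yields $p(T_M)=p(0)(I-T_M)+p(1)T_M$, hence
\[
\|p(T_M)\|_1=\max\bigl(|p(1)|,\,M|p(1)-p(0)|+|p(0)|\bigr).
\]
Choosing $p(0)=1,\,p(1)=0$ gives $\|p(T_M)\|_1=M+1$, while the classical Pick--Nevanlinna theorem on $D(0,M)$ gives
\[
\inf\bigl\{\|f\|_{\infty,\overline{D(0,M)}}:f\in H^\infty(D(0,M)),\,f(0)=1,\,f(1)=0\bigr\}=M,
\]
the infimum being attained by the M\"obius function $z\mapsto M^2(1-z)/(M^2-z)$ and approached arbitrarily well by its polynomial Taylor truncations. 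Hence $\Psi(T_M)\geq (M+1)/M$, and any $M\in(1,10)$ produces $\Psi_{\mathcal{B}(\mathbb{C}^2,\|\cdot\|_1)}>1.1$.

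For the upper bound, the central observation is that by Cayley--Hamilton $p(T)=q(T)$, where $q$ is the (degree $\leq 1$) Hermite interpolant of $p$ at $\sigma(T)$. Combining the affine identity $V(q(T))=q(V(T))$ from \eqref{alphabeta} with the equality $\nu(\cdot)=\|\cdot\|_1$ established above then gives
\[
\|p(T)\|_1=\|q(T)\|_1=\nu(q(T))=\sup_{z\in V(T)}|q(z)|.
\]
The upper bound thus reduces to the purely function-theoretic inequality $\sup_{V(T)}|q|\leq 13\sup_{V(T)}|p|$, where $q$ is the Hermite interpolant of $p$ at the at-most-two points of $\sigma(T)\subset V(T)$. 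This will be proved by a case analysis on the geometry of $V(T)$ coming from the representation above: when $V(T)$ is a single disk, a Pick--Nevanlinna/Schwarz estimate bounds the linear interpolant against $\sup_{V(T)}|p|$; when $V(T)$ is a genuine stadium, the analysis is split along the two constituent disks and recombined through the tangent segments.

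The main obstacle is the eccentric-stadium regime, where both eigenvalues lie close to one cap while $V(T)$ extends far along its axis: there the naive divided-difference bound $|p[\lambda_1,\lambda_2]|\lesssim \sup_{V(T)}|p|/|\lambda_1-\lambda_2|$ is too weak, and a sharper Pick-type estimate exploiting the hyperbolic geometry inside the constituent disk containing the eigenvalues is required. Bookkeeping the numerical constants arising from this analysis---together with the fact that eigenvalues lying on $\partial V(T)$ are automatically semisimple (Lemma~\ref{eigbd}), so no anomalous terms appear when the Cauchy contour is shrunk onto $\partial V(T)$---ultimately yields the numerical bound~$13$.
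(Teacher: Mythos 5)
Your description of $V(T)$ as $\overline{\conv}\bigl(D(a,|c|)\cup D(d,|b|)\bigr)$ is correct and agrees with the paper's Gershgorin-disk proposition, and your lower bound is complete and correct: it is a genuinely different example from the paper's (which uses $\begin{bmatrix}2&1\\0&0\end{bmatrix}$ and $\cos z$), and in fact stronger, since the idempotent $T_M$ with $M\downarrow 1$ gives $\Psi\geq(M+1)/M\to2$, not merely $>1.1$; the only cosmetic point is that the extremal Blaschke-type function must be replaced by polynomials respecting $p(0)=1$, $p(1)=0$ (e.g.\ corrected Taylor truncations), which works since the pole lies at $M^2>M$. Your reduction of the upper bound is also sound and rather elegant: $\nu(\cdot)=\norm{\cdot}_1$ on this algebra, so by Cayley--Hamilton and \eqref{alphabeta} one has $\norm{p(T)}_1=\sup_{V(T)}|q|$ with $q$ the degree-$\leq1$ Hermite interpolant of $p$ at $\sigma(T)$; this is essentially equivalent to the paper's starting identities ($p(T)=(p(1)-p(0))T+p(0)I$ after normalizing the eigenvalues to $\{0,1\}$, resp.\ the $p(0),p'(0)$ expansion in the defective case).

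However, the upper bound is not proved: everything after the reduction is an announcement, not an argument. The entire quantitative content of the theorem --- producing an absolute constant, let alone $13$ --- lies in bounding the linear interpolant (or first-order jet) by $\sup_{V(T)}|p|$, and this requires exploiting how the eigenvalues sit inside $V(T)$ relative to its size; a generic Schwarz--Pick estimate does not suffice, since for two points near the boundary of a disk the divided difference is not controlled by the sup norm with a universal constant. This is exactly where the paper works: in the defective case it shows the inscribed disk $D\bigl(0,\tfrac{|a|^2+|c|^2}{2}\bigr)\subseteq V(T)$ and uses a Cauchy estimate on $|p'(0)|$ to get $2+\sqrt2$; in the diagonalizable case (eigenvalues normalized to $0,1$ via $ad-bc=1$) it splits into $\norm{T}_1\leq6$ (trivial bound $2\norm{T}_1+1\leq13$) versus $\norm{T}_1>6$, proves the inradius bound $r_0\geq\tfrac{|ab|+|cd|-1}{2}$, applies the Schwarz lemma to $p-p(0)$, and runs a four-case analysis according to which Gershgorin disks contain the eigenvalues, using $|ad-bc|=1$ to compare $|ab|+|cd|$ with $\norm{T}_1$. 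Your sketch acknowledges the problematic ``eccentric-stadium'' regime and defers it to ``a sharper Pick-type estimate'' and ``bookkeeping,'' but supplies neither; also, the appeal to Lemma~\ref{eigbd} only gives qualitative interiority of a defective eigenvalue, with no quantitative distance to $\partial V(T)$, so by itself it yields no constant. As it stands, the inequality $\Psi_{\mathcal B(\Comp^{2},\norm\cdot_1)}\leq13$ is asserted rather than established.
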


We divide the proof of this estimate into several steps.
{   First we show that the algebraic numerical range in the $\ell^1$-induced norm algebra is the Gershgorin column set.}  This natural result is crucial and, to the best of our knowledge, cannot be found in the literature.  The statement of Theorem~\ref{13} follows  directly from Lemma~\ref{jordan-1}, Lemma~\ref{jordan-2} (the upper bound) and Example~\ref{>1.1} (the lower bound).

\begin{theorem} \label{V1}
In the algebra  $\mathcal B(\ell^1,\norm\cdot_1)$  the algebraic numerical range of a bounded operator $T$ equals 
$$
V(T, \mathcal B(\ell^1,\norm\cdot_1) ) 
=\overline\conv \bigcup_{j=1}^{\infty} \bigg\{ D\bigg(t_{j,j},\ \sum_{k=1,  k\neq j}^{\infty} |t_{k,j}| \bigg) \bigg\},
$$
where $ (t_{k,j})_{k\in \mathbb N}:=Te_j$ and $e_1,e_2,\dots$ is the canonical  basis of $\ell^1$.  

Therefore, the algebraic numerical range of a matrix $T = [t_{i,j}]_{ij=1}^n$ in the algebra $\mathcal B(\Comp^{n\times n},\norm{\cdot}_{1})$  is given by the convex hull of the Gershgorin disks corresponding to its columns.
\end{theorem}

\begin{proof}
Before we proceed with the proof let us note that the operator norm of $T\in \mathcal B(\ell_1, \norm\cdot_1)$ can be calculated similarly to $\ell^1$-matrix norm. Namely, let $e_1,e_2,\dots$ be the canonical basis of $\ell^1$ and let $e_1^*,e_2^*,\dots$ denote their dual operators (i.e.\ the coefficient functionals corresponding to the Schauder basis $(e_{j})_{j=1}^{\infty}$). Define $t_{k,j} = e_k^* Te_j$  {. Then} the norm is given by

$$\norm{T}_1 = \sup_{j\in\mathbb N} \ \sum_{k\in\mathbb N} |t_{k,j}|.$$

Now let us fix an angle $\theta\in[0,2\pi)$. Our goal is to find the supporting hyperplane $H_{\theta}$ for the set $V(T)$ using formulas \eqref{hyperset}--\eqref{r-theta}.  
To simplify calculations let us rotate the coordinate complex plane by $\theta$, so that 
$$T' := [t_{i,j}']_{i,j\in\mathbb N} := e^{-i\theta}T, \qquad H' := H_0(T') = H_{\theta}(T), \qquad r':= r_0(T') = r_{\theta}(T).$$

Then the distance $r'$ can be expressed as 
\begin{eqnarray*}
 r' & = & \lim_{\alpha\to 0+}\frac{\norm{I + \alpha T'}_{1} - 1}{\alpha} \\
   & =& \lim_{\alpha\to 0+} \sup _{j\in \mathbb N} \left\{
    \frac{|1+ \alpha t_{j,j}'| - 1 + \sum_{k=1, k\neq j}^{\infty} | \alpha t_{k,j}'|}{\alpha} \right\}.
\end{eqnarray*}
For each $\alpha$ let us choose a sequence of indices $(j_m)_{m\in\mathbb N}$ approaching the supremum above and define a function

$$f(\alpha,m):= \frac{|1+ \alpha t_{j_m,j_m}'| - 1}{\alpha}  + \sum_{k=1, k\neq j_m}^{\infty} |t_{k,j_m}'|, \qquad a\in(0,1),\  m\in\mathbb N. $$
Note that the function $g(\alpha):=\frac{1}{\alpha}(\norm{I+\alpha T} - 1)$ is   {decreasing}, as for any $\alpha <\beta$ we have
$$
g(\alpha) = \frac{\norm{\frac{\beta}{\alpha} I + \beta T}_{1} - \frac{\beta}{\alpha} }{\beta} 
\leq \frac{\norm{I + \beta T}_{1} + \norm{\left( \frac{\beta}{\alpha} - 1\right)I} -\frac{\beta}{\alpha}  }{\beta} = g(\beta).
$$
Hence, we infer that
$$
r' = \lim_{n\to\infty}\lim_{m\to\infty} f(1/n,m).
$$
Next we show that $f(1/n,m)$ satisfies the conditions of the Moore-Osgood theorem to switch the order of the limits.
Note that $\lim\limits_{m\to\infty} f(1/n,m) = \frac{\norm{I + 1/n T'}_{1} - 1}{1/n} <\infty$  for each $n\in\mathbb N$ by the definition. To find the other limit observe that for an arbitrary complex number $z=a+bi$ we have

$$\lim_{n\to\infty}(|z + n| - n)
= \lim_{n\to\infty} \frac{a^2 + 2an + b^2}{\sqrt{(a+n)^2+b^2} + n} 
= a = \text{Re}(z).
$$
And consequently,

\begin{equation}\label{lim_n}
    \lim_{n\to\infty} f(1/n,m) = \sum_{k=1, k\neq j_m}^{\infty} |t_{k,j_m}'| + \text{Re}(t_{j_m,j_m}').
\end{equation}
Let us show that $\lim\limits_{n\to\infty} f(1/n,m)$ is also uniform in $m$. Observe that $|t'_{k,j}|\leq ||T||_1$ for all $k,j$.
For $\varepsilon >0$ let us choose $n>\frac{2}{\varepsilon}M^2$, where $M:= \max\{1,\norm{T}_1\}$. For simplicity let $t'_{j_m,j_m} = a_m+ib_m$ be the decomposition into real and imaginary parts. Then

\begin{eqnarray*}
0\leq f(1/n, m) - \lim_{n\to\infty} f(1/n,m) 
&=& \sqrt{(a_m+n)^2+b_m^2} - n - a_m \\
&= & \frac{b_m^2}{\sqrt{(a_m+n)^2+b_m^2} + n+a_m} 
\leq \frac{M^2}{n-M} \leq \varepsilon
\end{eqnarray*}
for all $m\in\mathbb N$. Hence, we can change the order of the limits in the definition of $r'$, which together with the equality \eqref{lim_n} provides
$$
r' = \lim_{m\to\infty} \left( \sum_{k=1, k\neq j_m}^{\infty} |t_{k,j_m}'| + \text{Re}(t_{j_m,j_m}') \right)
= \sup _{j\in \mathbb N} \left\{ \sum_{k=1, k\neq j}^{\infty} | t_{k,j}'| + \text{Re}( t_{j,j} ')\right\} . 
$$

Let us now fix  $j$ and consider the Gershgorin disk corresponding to the $j$-th column of $T'$, i.e., $D(t'_{j,j}, \sum_{k=1, k\neq j}^{\infty} | t_{k,j}'|)$. Let us look at its vertical tangent lines. If the Gershgorin disk is just a point there is only one such line passing through $t_{j,j}$, let us call it $l_{j,\theta}$.
Otherwise there are two such lines. Let us denote their touch points as $p_1$ and $p_2$ and without loss of generality assume that $\text{Re}(p_1) < \text{Re}(p_2)$. Then we denote the line corresponding to $p_2$ by $l_{j, \theta}$ (see Figure \ref{img_l_theta}). 

\begin{figure}[ht]
\centering
\includegraphics[width=.4\linewidth]{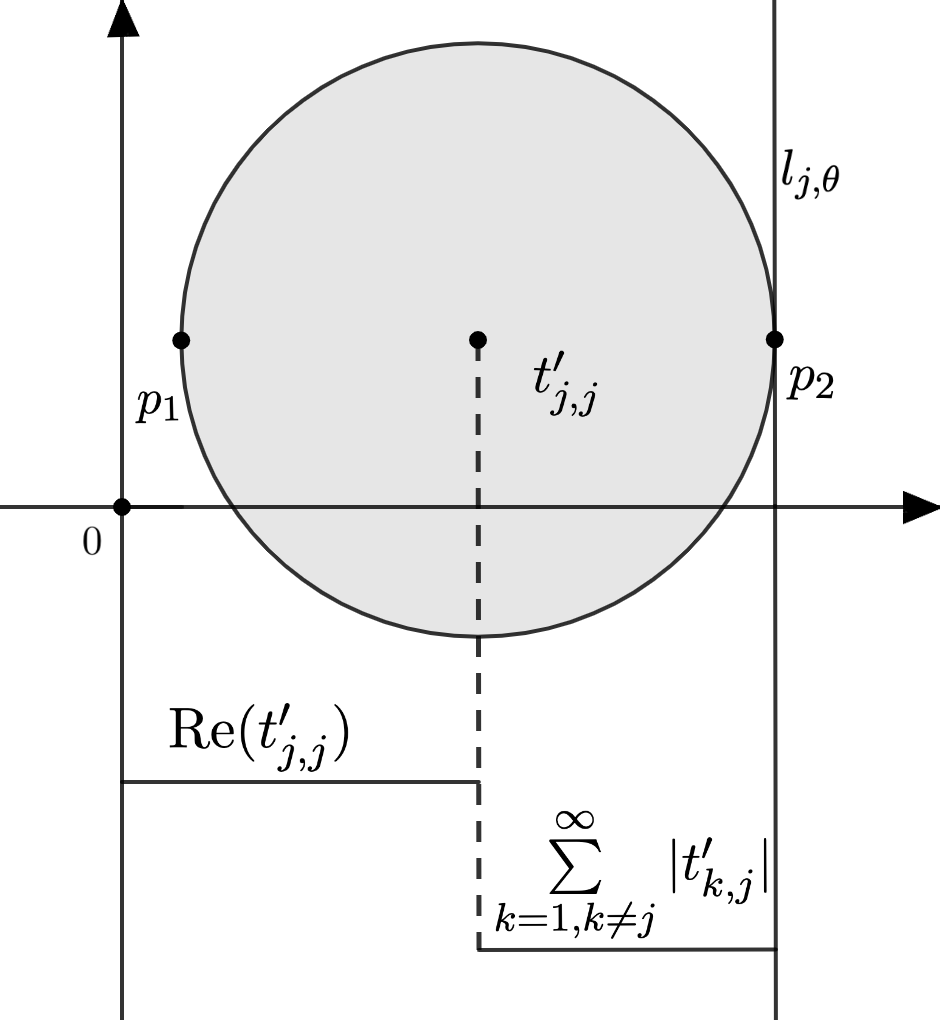} 
\hspace{0.3cm}
\includegraphics[width=.4\linewidth]{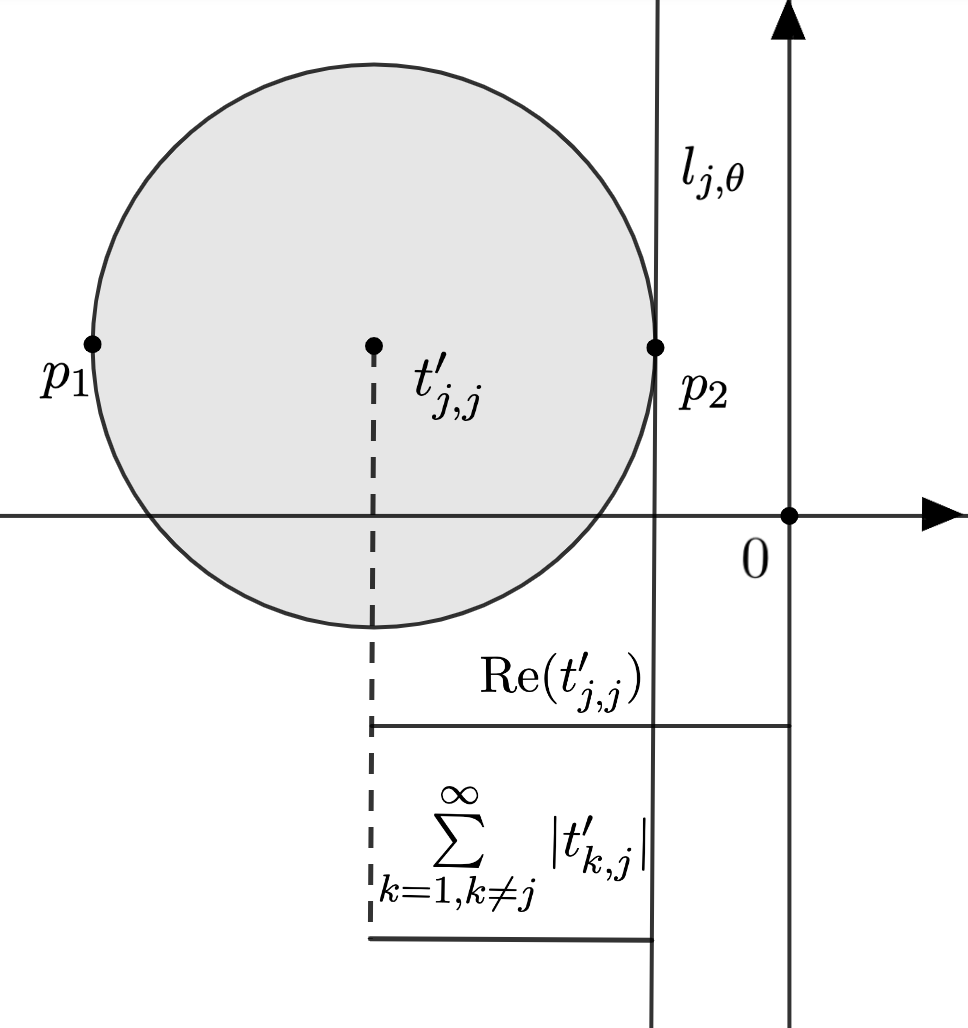} \par 
\caption{\small Illustration of the Gershgorin disk $D(t'_{j,j}, \sum_{k=1, k\neq j}^{\infty} | t_{k,j}'|)$ together the tangent line $l_{j,\theta}$. Its radius and the distance from 0 to its center is highlighted. The left picture shows an example when $\text{Re}(t_{j,j}')$ is positive and the right one corresponds to the negative case.}
\label{img_l_theta}
\end{figure}

Now it can be easily seen that the expression $\sum_{k=1, k\neq j}^{n} | t_{k,j}'| + \text{Re}(t_{j,j}')$ is equal to the distance from $0$ to the tangent line $l_{j,\theta}$, which, in turn, is equal to $\text{Re}(p_2)$.  
So, $r'$ is equal to the supremum of such distances from $0$ to $l_{j,\theta}$ over all $j\in \mathbb N$. Hence, half-plane $H'$ contains all Gershgorin disks and, moreover, it  is tangent to the closure of their convex hull. That means that $H' = H_{\theta}(T)$ is simultaneously a   {supporting} half-plane for the algebraic numerical range $V(T)$ and for the convex hull of the disks. Since both these sets are convex, they must be equal.
\end{proof}

\begin{remark}
{   Note that Theorem~\ref{V1} along with the inclusion $\sigma(T)\subseteq V(T)$ discussed in Section~\ref{s:prel},  provides  a broader explanation of why the Gershgorin set contains the eigenvalues. 
Additionally, it is worth  recalling the relationship between the Gershgorin sets and the classical numerical range, as discussed in \cite{Johnson1974,chang2013}}. Namely,  the classical numerical range is always contained in the convex hull of the union of the disks 
$$ 
D\bigg(t_{i,i},\ \frac{1}{2}\sum_{k=1,  k\neq i}^n |t_{i,k}|+|t_{k,i}|\bigg),
$$
which makes the latter set automatically a $(1+\sqrt 2)$-spectral set in the algebra $\mathcal B(\Comp^{n\times n},\norm\cdot_{\ell^2})$ and a $\sqrt{n}(1+\sqrt 2)$-spectral set in the algebra $\mathcal B(\Comp^{n\times n},\norm\cdot_{\ell^1})$.
It is, however, easy to verify that in general neither $V(T, \mathcal B(\mathbb{C}^{n},\norm\cdot_1)$  contains the classical numerical range nor conversely. 
\end{remark}

Now let us consider separate cases depending on the Jordan form of the $2\times 2$ matrix.

\begin{lemma}\label{jordan-1}
If $T\in \Comp^{2,2}$ is  similar to a Jordan block of size 2 then $\Psi(T; \mathcal B(\Comp^{2}, \norm\cdot_1 ) \leq 2+\sqrt 2$.
\end{lemma}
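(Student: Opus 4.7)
The plan is to reduce $T$ to a canonical nilpotent form, compute the algebraic numerical range, and then apply a Cauchy/Schwarz--Pick estimate using an explicit disk inscribed in $V(T)$.

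By Proposition~\ref{semicont}(iv), replacing $T$ by $(T-\lambda I)/\|T-\lambda I\|_{1}$, where $\lambda$ is the single eigenvalue of $T$, preserves $\Psi(T)$, so I may assume $T$ is nilpotent of index two with $\|T\|_{1}=1$. Writing $T=\begin{pmatrix}a & b\\ c & -a\end{pmatrix}$ with $a^{2}+bc=0$, multiplying $T$ by a suitable phase (again by Proposition~\ref{semicont}(iv)) to make $a\geq 0$ real and conjugating by a unit-modular diagonal matrix (which is an $\ell^{1}$-isometric similarity) to make $b\geq 0$ real, the relation $a^{2}+bc=0$ forces $c\leq 0$ real. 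After possibly swapping rows and columns ($\ell^{1}$-isometry), I may also assume $b\geq |c|$. This yields the canonical form $a\in[0,\tfrac{1}{2}]$, $b=1-a$, $c=-a^{2}/(1-a)$, with $\|T\|_{1}=a+b=1$.

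The main technical step is the geometric claim $V(T)\supseteq D(0,\sqrt{2}-1)$. By the proposition preceding this lemma, $V(T)=\overline{\conv}\bigl(D(-a,1-a)\cup D(a,a^{2}/(1-a))\bigr)$. I would establish the inclusion by computing the support function $h_{V(T)}(\vec u)=\max(h_{D_{1}}(\vec u),h_{D_{2}}(\vec u))$ and verifying that $\min_{\vec u}h_{V(T)}(\vec u)\geq\sqrt{2}-1$, splitting the analysis at $a=1-1/\sqrt{2}$. For $a\leq 1-1/\sqrt{2}$, the smaller Gershgorin disk lies inside the larger, $V(T)=D(-a,1-a)$, and the distance from $0$ to its boundary equals $1-2a\geq\sqrt{2}-1$. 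For $a\in(1-1/\sqrt{2},\tfrac{1}{2}]$ the two disks are no longer nested and the minimum of $h_{V(T)}$ is attained along the direction where $h_{D_{1}}$ and $h_{D_{2}}$ coincide, producing the value $d(a):=(2a^{2}-2a+1)/(2(1-a))$; a direct derivative check shows $d'(a)>0$ on this interval and $d(1-1/\sqrt{2})=\sqrt{2}-1$, so $d(a)>\sqrt{2}-1$ there. This support-function computation is where I expect the main technical obstacle, since it relies on identifying the common external tangent of the two Gershgorin disks.

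With the disk inclusion in hand, the conclusion is a short polynomial estimate. Since $T^{2}=0$, every polynomial $p$ satisfies $p(T)=p(0)I+p'(0)T$, and the triangle inequality yields
\[\|p(T)\|_{1}\leq |p(0)|+|p'(0)|\|T\|_{1}=|p(0)|+|p'(0)|.\]
Normalizing $\sup_{V(T)}|p|\leq 1$ and applying the Schwarz--Pick inequality to $p$ on $D(0,\sqrt{2}-1)\subseteq V(T)$ gives $|p'(0)|\leq (1-|p(0)|^{2})/(\sqrt{2}-1)$. Maximizing $x+(1-x^{2})/(\sqrt{2}-1)$ over $x\in[0,1]$ (with extremum at $x=(\sqrt{2}-1)/2$) yields
\[\Psi(T)\leq \frac{\sqrt{2}-1}{4}+\frac{1}{\sqrt{2}-1}=\frac{3+5\sqrt{2}}{4}<2+\sqrt{2}.\]
The cruder Cauchy estimate $|p'(0)|\leq 1/(\sqrt{2}-1)$ would only give the non-strict bound $2+\sqrt{2}$, so it is the use of Schwarz--Pick (or an equivalent refinement) that forces the strict inequality uniformly in the canonical parameter $a$, including at the critical value $a=1-1/\sqrt{2}$.
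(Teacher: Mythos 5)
Your argument is correct and in fact proves more than the lemma asks: you get $\Psi(T)\le\tfrac{3+5\sqrt{2}}{4}\approx 2.52$, with the strict inequality $<2+\sqrt{2}$ immediate. The paper's route is different in execution though similar in skeleton: it parametrizes $T=S\bigl[\begin{smallmatrix}0&1\\0&0\end{smallmatrix}\bigr]S^{-1}$ with $S=\bigl[\begin{smallmatrix}a&b\\c&d\end{smallmatrix}\bigr]$, $ad-bc=1$, so that $V(T)$ is the hull of $D(-ac,|c|^{2})$ and $D(ac,|a|^{2})$, computes $\norm{p(T)}_1$ exactly, inscribes the disk $D\bigl(0,\tfrac{|a|^{2}+|c|^{2}}{2}\bigr)$ via the trapezoid-midline observation, bounds $|p'(0)|$ by the Cauchy coefficient estimate, and finishes by maximizing $\tfrac{\alpha\beta+\max(\alpha,\beta)^{2}}{\alpha^{2}+\beta^{2}}$, which is tight and yields exactly $2+\sqrt{2}$. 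You instead normalize by $\ell^{1}$-isometric similarities and scaling to a one-real-parameter family, show the inradius of $V(T)$ about $0$ is at least $(\sqrt{2}-1)\norm{T}_1$, and replace the exact norm formula plus Cauchy by the cruder triangle inequality $\norm{p(T)}_1\le|p(0)|+|p'(0)|$ sharpened with Schwarz--Pick; this trade buys a smaller constant and delivers strictness without any extremal-case discussion. Two small points to tighten: first, your case split at $a=1-1/\sqrt{2}$ is avoidable, since the convex hull of two disks always contains the disk centered at the midpoint of their centers with radius the average of their radii (that average is exactly your $d(a)=\tfrac{2a^{2}-2a+1}{2(1-a)}$, whose minimum over $a\in[0,\tfrac12]$ is $\sqrt{2}-1$), so the ``main technical obstacle'' you anticipate disappears; second, you should state explicitly that conjugation by an $\ell^{1}$-isometry preserves both $\norm{p(T)}_1$ and $V(T)$ (hence $\Psi$), and handle the degenerate corner $a=b=0$, $c\neq0$, where the formula $c=-a^{2}/b$ is vacuous but the row/column swap and a phase reduce it to the $a=0$ member of your family. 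With those remarks added, your proof is complete and, constant-wise, sharper than the paper's.
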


\begin{proof}

Observe that $T$  can be written in the following from
\begin{equation*}
     T =
     \begin{bmatrix}
     a & b \\
     c & d
     \end{bmatrix}
     \begin{bmatrix}
      x& 1 \\
      0& x
     \end{bmatrix}
     \begin{bmatrix}
      d& -b \\
      -c& a
     \end{bmatrix}
     , \quad a,b,c,d,x\in\mathbb{C}, \quad\text{with } ad-bc =1.
\end{equation*}
Since $\Psi(\alpha T + \beta I) = \Psi(T)$ (see Proposition \ref{semicont}), we can assume $x=0$. Then $T$ takes form
\begin{equation*}
     T =      
     \begin{bmatrix}
     -ac & a^2 \\
     -c^2 & ac
     \end{bmatrix}
\end{equation*}
and its algebraic numerical range $V(T)$ is given by the convex hull of two disks $D(-ac,|c|^2)$ and $D(ac,|a|^2)$.
Let us take a polynomial $p \in \mathbb{C} [z]$ such that $\sup\limits_{z\in V(T)} |p(z)|=1$. By a straightforward computation we have
\begin{equation*}
     p(T) =      
     \begin{bmatrix}
     a & b \\
     c & d
     \end{bmatrix}
     \begin{bmatrix}
      p(0) & p'(0) \\
      0& p(0)
     \end{bmatrix}
     \begin{bmatrix}
      d& -b \\
      -c& a
     \end{bmatrix}
     =
     \begin{bmatrix}
      p(0) - ac p'(0)& a^2 p'(0) \\
      -c^2 p'(0) & p(0) + ac p'(0)
     \end{bmatrix}.
\end{equation*}
and so 
    $ ||p(T)||_1 = \max \{ |p(0)  - ac p'(0)| + |c^2 p'(0)|,\ |p(0) + ac p'(0)| + |a^2 p'(0)| \} $.

Notice that   {the} disk $D\left(0,\frac{|a|^2 +|c|^2}{2}\right)$ is contained in the algebraic numerical range $V(T)$. This can be seen by calculating the midline of the trapezoid formed by a common tangent to two disks $D(-ac,|c|^2)$ and $D(ac,|a|^2)$ and radii drawn to this tangent (see Figure \ref{img_lemma_1_trapezoid}).
 So, $p$ is analytic in $D\left(0,\frac{|a|^2 +|c|^2}{2}\right)$ and bounded by 1. Hence, by Cauchy's inequality for the Taylor series coefficients of a complex analytic function, we get $|p'(0)| \leq \frac{2}{|a|^2+ |c|^2}$.

\begin{figure}[ht]
\centering
\includegraphics[width=.4\linewidth]{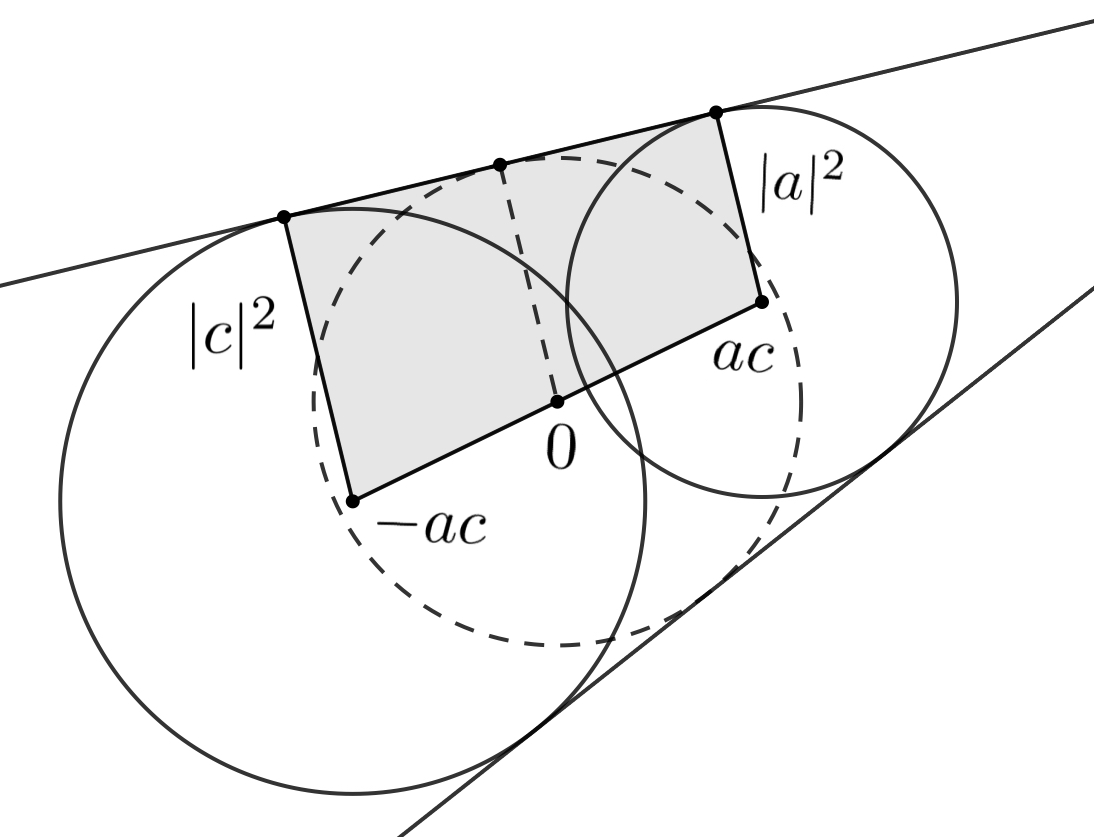} 
\caption{\small Illustration of the two disks $D(-ac,|c|^2)$ and $D(ac,|a|^2)$ with highlighted trapezoid formed by their common tangent and radii drawn to this tangent. The circle with dotted line illustrates the disk $D\left(0,\frac{|a|^2 +|c|^2}{2}\right)$ which is contained in the closure of the convex hull of $D(-ac,|c|^2)$ and $D(ac,|a|^2)$.} 
\label{img_lemma_1_trapezoid}
\end{figure}
 Therefore,     
\begin{equation*}
    \|p(T)\|_1 \leq |p(0)| + (|ac| + \max(|a|^2, |c|^2)) |p'(0)| 
    \leq 1 + 2\frac{|ac|+ \max(|a|^2, |c|^2)}{|a|^2+|c|^2}.
\end{equation*}
 Using the fact that the function $f(\alpha,\beta)=\frac{\alpha\beta+ \beta^2}{\alpha^2+\beta^2}$ is bounded by $(1+\sqrt{2})/2$ for $0<\alpha\leq \beta$ we obtain
$$
  \|p(T)\|_1 \leq2+\sqrt{2}
$$
as desired.
\end{proof}

\begin{lemma}\label{jordan-2}
    If $T\in \Comp^{2,2}$ is a diagonalizable matrix then
    $\Psi(T; \mathcal B(\Comp^{2}, \norm\cdot_1 ) \leq 13$.
    \end{lemma}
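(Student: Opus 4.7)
By the affine invariance of $\Psi$ from Proposition~\ref{semicont}(iv), I reduce to the case where the two distinct eigenvalues of $T$ are $0$ and $1$, making $T$ an idempotent. Writing $T=\begin{bmatrix}a&b\\c&1-a\end{bmatrix}$, idempotency forces $bc=a(1-a)$, and the description of $V(T)$ for the $\ell^{1}$-induced norm yields $V(T)=\overline{\conv}(D(a,|c|)\cup D(1-a,|b|))$. Lagrange interpolation at $\{0,1\}$ gives the identity $p(T)=p(0)I+(p(1)-p(0))T$; setting $M:=\sup_{V(T)}|p|$ (so that $|p(0)|,|p(1)|\le M$ since $0,1\in V(T)$), the triangle inequality yields
\begin{equation*}
\|p(T)\|_{1}\le M+|p(1)-p(0)|\,\|T\|_{1}.
\end{equation*}
If $\|T\|_{1}\le 6$, the trivial bound $|p(1)-p(0)|\le 2M$ already gives $\|p(T)\|_{1}\le 13M$.

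If $\|T\|_{1}>6$, I use the permutation invariance of $\|\cdot\|_{1}$ to arrange $\|T\|_{1}=|b|+|1-a|$. The key geometric input is the inclusion
\begin{equation*}
V(T)\supseteq D\bigl(\tfrac{1}{2},\tfrac{|b|+|c|}{2}\bigr),
\end{equation*}
obtained as the value at $t=\tfrac12$ of the pencil $D((1-t)a+t(1-a),(1-t)|c|+t|b|)\subseteq V(T)$, $t\in[0,1]$. A short analysis using $|b||c|=|a||1-a|$ and $\|T\|_{1}>6$ shows $|b|+|c|>1$, so this disk contains $[0,1]$ in its interior with margin $\tfrac12(|b|+|c|-1)$. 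Cauchy's estimate along $[0,1]$ then gives
\begin{equation*}
|p(1)-p(0)|\le\int_{0}^{1}|p'(z)|\,\mathrm{d}z\le\frac{2M}{|b|+|c|-1},
\end{equation*}
and $\|p(T)\|_{1}\le 13M$ reduces to the algebraic inequality $|1-a|+6\le 5|b|+6|c|$.

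For this last inequality, write $B=|b|$, $L=|1-a|$, $A=|a|$, $C=|c|$, so that $C=AL/B$ and $A\ge|L-1|$ by the triangle inequality. The target becomes $5B+6AL/B\ge L+6$, which (after substituting $A\ge|L-1|$) is a convex function of $B>0$. Over the admissible range $B>\max(0,6-L)$ (forced by $\|T\|_{1}>6$), its minimum exceeds $L+6$: for $L\ge 1.336$ the unconstrained minimum $2\sqrt{30|L-1|L}$ already dominates $L+6$ (the cutoff comes from the discriminant of the quadratic $119L^{2}-132L-36$), while for smaller $L$ the constraint $B+L>6$ pushes $B$ into the monotone-increasing regime where a direct evaluation at the boundary $B=6-L$ suffices.

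I expect the main obstacle to be exactly this final algebraic verification, which requires organising the idempotency relation $bc=a(1-a)$ together with the triangle estimates for $|a|,|1-a|$ into a single convex/quadratic condition; the constant $13$ is visibly sub-optimal and arises only from the crude crossover between the trivial estimate (for $\|T\|_{1}\le 6$) and the Cauchy estimate (for $\|T\|_{1}>6$), each of which separately yields a significantly smaller constant in its own regime.
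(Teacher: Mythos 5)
Your plan is correct and follows the same route as the paper's proof: the interpolation identity $p(T)=p(0)I+(p(1)-p(0))T$, the dichotomy at $\|T\|_{1}=6$ (with $2\cdot 6+1=13$ in the small-norm case), and an inscribed disk of radius $\tfrac{1}{2}(|b|+|c|)$ in the convex hull of the two column Gershgorin disks, used to control $|p(1)-p(0)|$. You diverge in two places, both to your advantage: you centre the inscribed disk at $\tfrac12$ and apply Cauchy's estimate along $[0,1]$, giving $|p(1)-p(0)|\le 2M/(|b|+|c|-1)$, which is a factor $2$ sharper than what the paper's Schwarz-lemma argument at the off-centre point $0$ yields; and you replace the paper's four-case analysis (according to which Gershgorin disk contains which eigenvalue) by the single convex inequality $5B+6|L-1|L/B\ge L+6$ on $B>\max(0,6-L)$. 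I checked that inequality: the unconstrained minimum $2\sqrt{30|L-1|L}$ dominates $L+6$ exactly when $119L^{2}-132L-36\ge 0$, i.e.\ $L\gtrsim 1.336$, and for smaller $L$ the constraint forces $B>6-L>4.6$, far to the right of the minimiser $B^{*}=\sqrt{6|L-1|L/5}<0.74$, so $5B\ge 5(6-L)\ge L+6$ already suffices. The only loose ends are routine: the normalisation $\|T\|_{1}=|b|+|1-a|$ should be justified as conjugation by the swap permutation, an isometric automorphism preserving $V$ and $\Psi$; and one should observe that $b=0$ cannot occur once $\|T\|_{1}>6$ (since $bc=a(1-a)$ and $b=0$ force $a\in\{0,1\}$ and hence $\|T\|_{1}\le 1$), so the substitution $C=AL/B$ is always legitimate.
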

    \begin{proof}
  The matrix $T$ can be written in the form
\begin{equation*}
     T =      
     \begin{bmatrix}
     a & b \\
     c & d
     \end{bmatrix}
     \begin{bmatrix} x& 0 \\ 0& y
     \end{bmatrix}
     \begin{bmatrix}
      d& -b \\
      -c& a
     \end{bmatrix}
     , \quad a,b,c,d,x,y\in\mathbb{C}, \quad ad-bc =1.
\end{equation*}
As $\Psi(\lambda I) = 1$, let us assume $x\neq0$, $x\neq y$. Also, since $\Psi(T)=\Psi(\frac{1}{x}(T-yI))$, we can assume $x=1$ and $y=0$. Then
\begin{equation*}
     T = [a,c][d,-b]^T=     
       \begin{bmatrix}
     ad & -ab \\
     cd & -bc
     \end{bmatrix}.
\end{equation*}
Let us fix a polynomial $p \in \mathbb{C} [z]$ such that $\sup\limits_{z\in V(T)} |p(z)|=1$. Notice that 
\begin{equation*}
     p(T) =      
     \begin{bmatrix}
     a & b \\
     c & d
     \end{bmatrix}
     \begin{bmatrix}
      p(1) & 0\\
      0& p(0)
     \end{bmatrix}
     \begin{bmatrix}
      d& -b \\
      -c& a
     \end{bmatrix}
     =(p(1)-p(0))T+p(0)I,
\end{equation*}
and hence by the triangle inequality
\begin{equation}\label{norm}
    \norm{p(T)}_1 \leq |p(1)-p(0)| \norm{T}_1 +|p(0)|.
\end{equation}
Since $\sup_{V(T)}|p|\leq1$, we clearly have $|p(0)-p(1)|\leq2$ and $|p(0)|\leq1$. Thus, if $\|T\|_{1}\leq6$, then

\begin{equation*}
     \norm{p(T)}_1 \leq 2\cdot 6 +1 = 13.
\end{equation*}
Now assume that $\norm{T}_1> 6$.
Observe that the algebraic numerical range $V(T)$ is given by the convex hull of Gershgorin disks $D_1:=D(ad, |cd|)$ and $D_2:=D(-bc,|ab|)=D(1-ad,|ab|)$. Each of the eigenvalues $0$ and $1$ of $T$ belongs to at least one of the disks $D_{1}$ and $D_{2}$ by the Gershgorin circle theorem.

Define a polynomial $q\in \mathbb C[z]$ as $q(z):=p(z)-p(0)$. Notice that $q(0)=0$ and also $|q(z)| \leq |p(z)|+ |p(0)| \leq 2$ for $z\in V(T)$. Let $r_0$ denote the maximal radius such that $D(0,r_0) \subseteq V(T)$.   {If $r_0 \geq 1$, then, by Schwarz’s lemma,}
\begin{equation*}
    |p(1)- p(0)| = |q(1)| \leq \frac{2}{r_0}.
\end{equation*}
Otherwise, if $r_0<1$,   {then the} same inequality holds trivially, as then $|q(1)|\leq2 \leq \frac{2}{r_0}$.

Next we estimate $r_0$. 
Draw   {the} line $l$ tangent to disks $D_1$ and $D_2$ which is closer to 0 (see Figure \ref{img_lemma_2_m0}). Let $t_0$ and $t_{1/2}$ denote the   {orthogonal} projections of $0$ and $1/2$ onto $l$.
Consider the trapezoid formed by the centers of $D_{1}$ and $D_{2}$, and the points where $l$ meets the disks. 
  {Let us recall that $ad - bc = 1$, so the midpoint of $ad$ and $-bc$ is $(ad - bc)/2 = 1/2$.
Then it is easy to see that} the line segment with endpoints $1/2$ and $t_{1/2}$ is a midline of this trapezoid and hence $|t_{1/2} - 1/2 | = \frac{1}{2}(|ab|+|cd|)$.

Let $m_0$ denote the projection of $0$ onto the midline. Then we have $|t_0|=|t_{1/2}-m_0| = |t_{1/2} - 1/2| - |m_0-1/2|$. It is easy to see that $|m_0-1/2|\leq 1/2$ as it is a leg in a right triangle with hypotenuse of length $1/2$. Altogether we get 
\begin{equation}\label{jordan-r}
    r_0 = |t_0|\geq \frac{|ab|+|cd|-1}{2}
\end{equation}

\begin{figure}[ht]
\centering
\includegraphics[width=.6\linewidth]{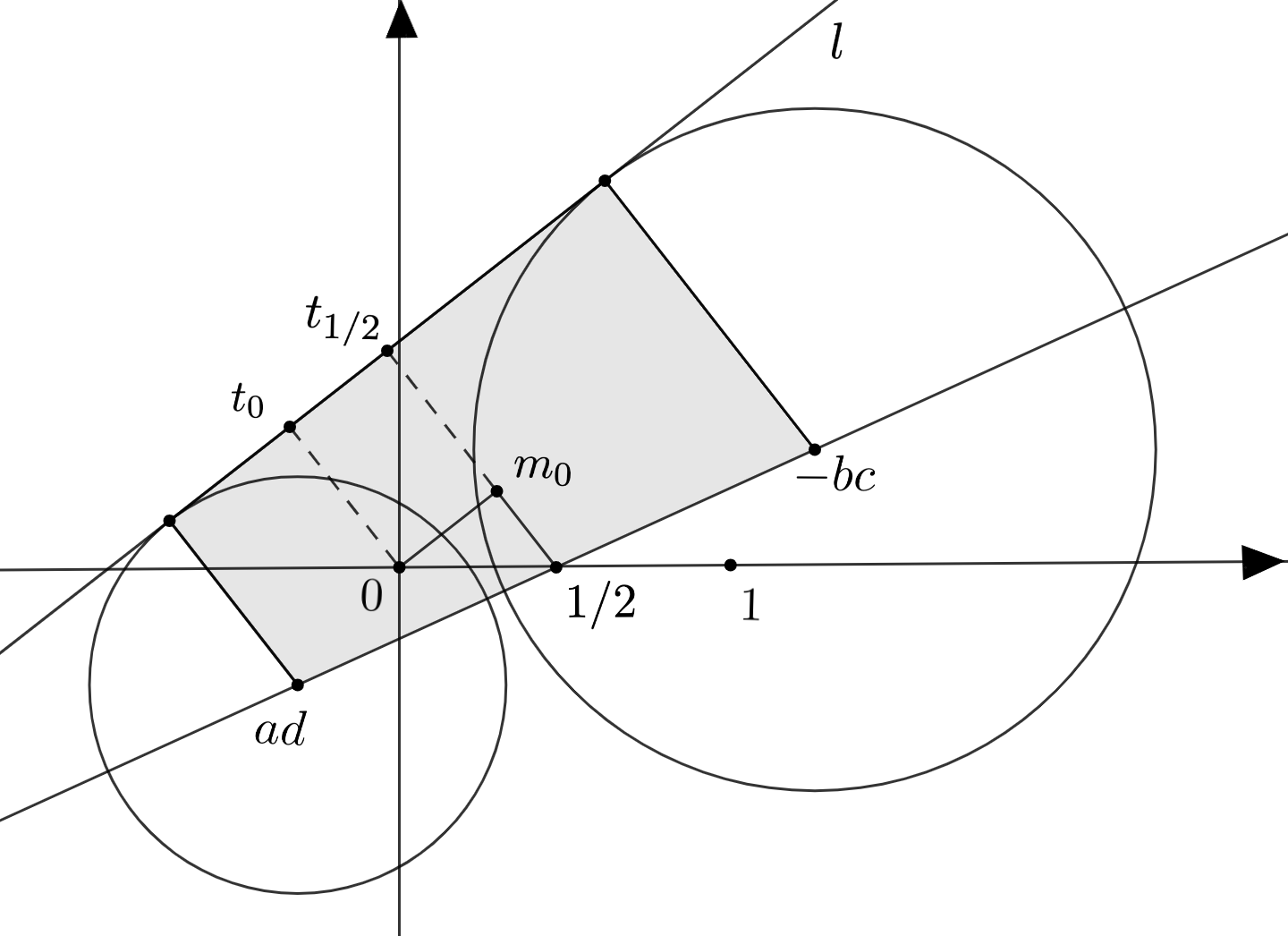} 
\caption{\small Illustration of the disks $D_1:=D(ad, |cd|)$ and $D_2:=D(-bc,|ab|)$ with highlighted trapezoid formed by their common tangent $l$ closest to 0 and radii drawn to it.}
\label{img_lemma_2_m0}
\end{figure}

Depending on the values of $|a|$, $|b|$, $|c|$ and $|d|$ we now distinguish between several cases.

\noindent\textit{Case 1:} $|a| \leq |c|$ and $|b| \leq |d|$, or equivalently $0,1\in D_1$.
\\In this case we have $\|T\|_{1}=|ad|+|cd|\leq2|cd|$. Inequality \eqref{norm} together with \eqref{jordan-r} gives
\begin{equation*}
     \norm{p(T)}_1 \leq 4 \frac{|ad|+|cd|}{|ab|+|cd|-1} + 1
     \leq 4 \frac{2|cd|}{|cd|-1} + 1.
\end{equation*}
Since we also assumed $\|T\|_{1}>6$, we have $|cd|>3$ and hence $\frac{2|cd|}{|cd|-1}<3$.
Altogether we get
\begin{equation*}
     \norm{p(T)}_1 \leq 4\cdot 3+1=13.
\end{equation*}

\noindent\textit{Case 2:} $|a| > |c|$ and $|b| > |d|$, or equivalently $0,1\in D_2$.  
\\This case is symmetrical to the previous \textit{Case 1}. Indeed, we have $6<\norm{T}=|ab|+|cb|\leq 2|ab|$ and
\begin{equation*}
     \norm{p(T)}_1 \leq 4 \frac{|ab|+|bc|}{|ab|+|cd|-1} + 1
     \leq 4 \frac{2|ab|}{|ab|-1} + 1
     \leq 13.
\end{equation*}

\noindent\textit{Case 3:}  $|a| \leq |c|$ and $|b| > |d|$, or equivalently $0\in D_1$ and $1\in D_2$. 
\\In this case inequality \eqref{norm} together with \eqref{jordan-r} gives
\begin{equation*}
     \norm{p(T)}_1 \leq 4 \frac{|ab|+|bc|}{|ab|+|cd|-1} + 1
     = 4 \frac{|ab|+|bc|}{(|ab|+|bc|)+(|cd|-|bc|)-1} + 1 .
\end{equation*}
Let us estimate difference $|bc|-|cd|$. 
Firstly $0<|bc|-|cd|$ due to our assumption $|b|>|d|$. On the other hand, since $|a|\leq |c|$ and $ad-bc=1$, by the triangle inequality we have $|bc|-|cd|\leq |bc|-|ad|\leq |bc-ad|=1$. So,
\begin{equation*}
     \norm{p(T)}_1 \leq 4 \frac{|ab|+|bc|}{|ab|+|bc|-2} + 1 \leq 7,
\end{equation*}
where in the last line we use the assumption $\norm{T}=|ab|+|bc|>6$ which implies $\frac{\norm{T}}{\norm{T}-2}\leq \frac{3}{2}$.

\noindent\textit{Case 4:}  $|a| > |c|$ and $|b| \leq |d|$, or equivalently $0\in D_2$ and $1\in D_1$. 
\\This case is symmetrical to \textit{Case 3}. Similar reasoning gives $0<|ad|-|ab|\leq |ad|-|bc|\leq |ad-bc|=1$ and
\begin{equation*}
     \norm{p(T)}_1 \leq 4 \frac{|ad|+|cd|}{(|ad|+|cd|)+(|ab|-|ad|)-1} + 1 \leq 4 \frac{|ad|+|cd|}{|ad|+|cd|-2} + 1 \leq 7.
\end{equation*}

\end{proof}

 Directly from Theorems~\ref{th:functions} and \ref{13} we receive the following explicit example of an   {infinite-dimensional} algebra, which is not a $C^*$-algebra, but nonetheless has a finite spectral constant.
\begin{corollary}
Let $X$ be a compact space, consider the Banach algebra $\mathcal A=C(X,\mathcal \Comp^{2,2})$ of matrix-valued functions on $X$ with the norm  
$\norm f:=\sup_{x\in X}\norm{ f(x)}_1$. Then
$
\Psi_{\mathcal A}\in[1.1, 13].
$  
\end{corollary}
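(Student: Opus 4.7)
The plan is to observe that this corollary is an immediate consequence of combining the two cited results, so the proof amounts to verifying that the hypotheses of both fit together. Set $\mathcal{B} := \mathcal{B}(\mathbb{C}^{2}, \norm{\cdot}_{1})$, which is a unital Banach algebra (it is isomorphic to $\mathbb{C}^{2,2}$ with the $\ell^{1}$-induced norm). Then $\mathcal{A} = C(X, \mathcal{B})$ is precisely the algebra of $\mathcal{B}$-valued continuous functions on the compact space $X$ equipped with the supremum norm, which is the setting of Theorem~\ref{th:functions}.

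First I would apply Theorem~\ref{th:functions} to conclude the identity
\begin{equation*}
\Psi_{\mathcal{A}} = \Psi_{C(X,\mathcal{B})} = \Psi_{\mathcal{B}} = \Psi_{\mathcal{B}(\mathbb{C}^{2},\norm{\cdot}_{1})}.
\end{equation*}
Next I would invoke Theorem~\ref{13}, which provides the two-sided bound $1.1 < \Psi_{\mathcal{B}(\mathbb{C}^{2},\norm{\cdot}_{1})} \leq 13$. Combining these two facts yields $\Psi_{\mathcal{A}} \in (1.1, 13] \subseteq [1.1, 13]$, as claimed.

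There is no real obstacle in this argument; the only point worth a sentence of care is to note that the corollary does use Theorem~\ref{th:functions} in the genuinely matrix-valued case (the ``in particular'' statement of that theorem only treated the commutative $C^{*}$-case), but the main statement of Theorem~\ref{th:functions} is proved for an arbitrary unital Banach algebra $\mathcal{B}$, so it applies verbatim here. Finally, it is worth emphasising that $\mathcal{A}$ is genuinely not a $C^{*}$-algebra whenever $X$ is nonempty, since $\mathcal{B}(\mathbb{C}^{2},\norm{\cdot}_{1})$ is itself not a $C^{*}$-algebra; this is what makes the corollary a nontrivial instance of an infinite-dimensional non-$C^{*}$ Banach algebra with finite spectral constant.
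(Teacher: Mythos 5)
Your proof is correct and is exactly the paper's argument: the corollary is stated as following directly from Theorem~\ref{th:functions} (applied with $\mathcal{B}=\mathcal{B}(\Comp^{2},\norm{\cdot}_{1})$, giving $\Psi_{C(X,\mathcal B)}=\Psi_{\mathcal B}$) combined with the bounds $1.1<\Psi_{\mathcal{B}(\Comp^{2},\norm{\cdot}_{1})}\leq 13$ of Theorem~\ref{13}. Your added remarks about the matrix-valued (non-commutative) case of Theorem~\ref{th:functions} and about $\mathcal A$ not being a $C^{*}$-algebra are accurate but not needed.
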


The following example illustrates that the constant $\Psi_{\mathcal B(\mathbb C^2,\norm{\cdot}_1)}$ is greater than 1.

\begin{example}\label{>1.1}
Consider   {the} matrix $T=\begin{bmatrix}
2 & 1 \\
0 & 0
\end{bmatrix}$ and   {the} function $f(z) = \cos(z)$. 
  {Since }

$$T=\begin{bmatrix}
-1 & 1 \\
2 & 0
\end{bmatrix}
\begin{bmatrix}
0 & 0 \\
0 & 2
\end{bmatrix}
\begin{bmatrix}
0 & 1/2 \\
1 & 1/2
\end{bmatrix},
$$
  {a straightforward calculation shows that}

$$\norm{f(T)}_1=\max\left(|\cos(2)|,\frac{|\cos(2)-\cos(0)|}{2}+|\cos(0)|\right) > 1.708$$ 
  {and that} the algebraic numerical range is given by $V(T)=\text{conv}( \overline{\mathbb{D}} \cup \{2\} )$. On the other hand, notice that 
$$\cos(x+iy) =\cos(x) \cosh(y) - i\sin(x) \sinh(y)$$ 
and so 
$$|\cos(x+iy)|^2 = \cos(x)^2 (1+\sinh(y)^2) + \sin(x)^2\sinh(y)^2 = \cos(x)^2+\sinh(y)^2 .$$ 
Hence we have 
$$
\max_{z\in V(T)} |f(z)| \leq  \sqrt{1+\sinh(1)^2} < 1.55
$$
and 
$$
\Psi(T, \mathcal B(\mathbb C^2,\norm{\cdot}_1)> 1.708/1.55 > 1.1 .
$$
\end{example}

\section{Conclusions}

We have discussed the spectral constant of the numerical range for various Banach algebras.
Summarizing, we see three appearing questions for future research.

1. Is it true that $\Psi_{\mathcal A}<\infty$ for any matrix algebra $\mathcal A$?  {   More generally, is $\Psi(T)$  uniformly bounded for all Banach-algebra elements that are algebraic of a fixed degree $n$?}

2. Does there exist an operator $T$ which is polynomially bounded with constant 1, but with $\Psi(T)=\infty$?

3. Is it true that $\Psi(T)<\infty$  for all bounded operators on a combinatorial Banach space with the spreading property, in particular on the Schreier space?

\section*{Acknowledgment}

The Authors are indebted to Michael Hartz, Tomasz Kania and Anna Pelczar-Barwacz for fruitful discussions. {   The comments of the referees have led to a substantial improvement of the manuscript, which is here gratefully acknowledged.}

The research cooperation was funded by the program Excellence Initiative – Research University at the Jagiellonian University in Krak\'ow, which is here gratefully acknowledged.

The fourth named author has been financed by the Dutch Research Council (NWO) grant OCENW.M20.292.


\bibliographystyle{abbrvnat}
\bibliography{Literatura.bib}

\begin{thebibliography}{45}
\providecommand{\natexlab}[1]{#1}
\providecommand{\url}[1]{\texttt{#1}}
\expandafter\ifx\csname urlstyle\endcsname\relax
  \providecommand{\doi}[1]{doi: #1}\else
  \providecommand{\doi}{doi: \begingroup \urlstyle{rm}\Url}\fi

\bibitem[Antunes et~al.(2021)Antunes, Beanland, and Chu]{antunes2021geometry}
L.~Antunes, K.~Beanland, and H.~V. Chu.
\newblock On the geometry of higher order {S}chreier spaces.
\newblock \emph{Illinois Journal of Mathematics}, 65\penalty0 (1):\penalty0 47--69, 2021.
\newblock \doi{10.1215/00192082-8827623}.
\newblock URL \url{http://dx.doi.org/10.1215/00192082-8827623}.

\bibitem[Badea and Beckermann(2014)]{badea2013spectralsets}
C.~Badea and B.~Beckermann.
\newblock Spectral sets.
\newblock In L.~Hogben, editor, \emph{Handbook of linear algebra}, Discrete Math. Appl. (Boca Raton). Boca Raton, FL: Chapman \& Hall/CRC, 2nd enlarged ed. edition, 2014.
\newblock ISBN 978-1-4665-0728-9; 978-1-138-19989-7; 978-1-138-19989-7; 978-1-4665-0729-6.
\newblock \doi{10.1201/b16113}.

\bibitem[Balister(2019)]{balister2019bounds}
P.~Balister.
\newblock Bounds on {R}udin-{S}hapiro polynomials of arbitrary degree.
\newblock \emph{arXiv preprint arXiv:1909.08777}, 2019.

\bibitem[Balister et~al.(2020)Balister, Bollob{\'a}s, Morris, Sahasrabudhe, and Tiba]{Balister2020}
P.~Balister, B.~Bollob{\'a}s, R.~Morris, J.~Sahasrabudhe, and M.~Tiba.
\newblock {Flat Littlewood polynomials exist}.
\newblock \emph{Annals of Mathematics}, 192\penalty0 (3):\penalty0 977 -- 1004, 2020.
\newblock \doi{10.4007/annals.2020.192.3.6}.
\newblock URL \url{http://dx.doi.org/10.4007/annals.2020.192.3.6}.

\bibitem[Beanland et~al.(2019)Beanland, Duncan, Holt, and Quigley]{beanland2019extreme}
K.~Beanland, N.~Duncan, M.~Holt, and J.~Quigley.
\newblock Extreme points for combinatorial {B}anach spaces.
\newblock \emph{Glasgow Mathematical Journal}, 61\penalty0 (2):\penalty0 487--500, 2019.
\newblock \doi{10.1017/s0017089518000319}.
\newblock URL \url{https://doi.org/10.1017/s0017089518000319}.

\bibitem[Bickel and Gorkin(2024)]{bickel2024blaschke}
K.~Bickel and P.~Gorkin.
\newblock Blaschke products, level sets, and {C}rouzeix’s conjecture.
\newblock \emph{Journal d'Analyse Math{\'e}matique}, 152\penalty0 (1):\penalty0 217--254, 2024.
\newblock \doi{10.1007/s11854-023-0295-y}.
\newblock URL \url{http://dx.doi.org/10.1007/s11854-023-0295-y}.

\bibitem[Bickel et~al.(2020)Bickel, Gorkin, Greenbaum, Ransford, Schwenninger, and Wegert]{bickel2020crouzeix}
K.~Bickel, P.~Gorkin, A.~Greenbaum, T.~Ransford, F.~L. Schwenninger, and E.~Wegert.
\newblock Crouzeix’s conjecture and related problems.
\newblock \emph{Computational Methods and Function Theory}, 20\penalty0 (3):\penalty0 701--728, 2020.
\newblock \doi{10.1007/s40315-020-00350-9}.
\newblock URL \url{https://doi.org/10.1007/s40315-020-00350-9}.

\bibitem[Bickel et~al.(2023)Bickel, Corbett, Glenning, Guan, and Vollmayr-Lee]{bickel2023crouzeix}
K.~Bickel, G.~Corbett, A.~Glenning, C.~Guan, and M.~Vollmayr-Lee.
\newblock Crouzeix's conjecture, compressions of shifts, and classes of nilpotent matrices.
\newblock \emph{arXiv preprint arXiv:2312.04537}, 2023.

\bibitem[Blecher(1995)]{blecher1995completely}
D.~P. Blecher.
\newblock A completely bounded characterization of operator algebras.
\newblock \emph{Mathematische Annalen}, 303\penalty0 (1):\penalty0 227--239, 1995.
\newblock URL \url{https://doi.org/10.1007/bf01460988}.

\bibitem[Blecher and Le~Merdy(2004)]{blecher2004operator}
D.~P. Blecher and C.~Le~Merdy.
\newblock \emph{Operator algebras and their modules: an operator space approach}, volume~30.
\newblock Oxford University Press, 2004.

\bibitem[Bohr(1914)]{Bohr14}
H.~Bohr.
\newblock {A Theorem Concerning Power Series}.
\newblock \emph{Proceedings of the London Mathematical Society}, s2-13\penalty0 (1):\penalty0 1--5, 01 1914.
\newblock ISSN 0024-6115.
\newblock \doi{10.1112/plms/s2-13.1.1}.
\newblock URL \url{http://dx.doi.org/10.1112/plms/s2-13.1.1}.

\bibitem[Bonsall and Duncan(1971)]{bonsall71BookI}
F.~F. Bonsall and J.~Duncan.
\newblock \emph{Numerical ranges of operators on normed spaces and of elements of normed algebras}.
\newblock CUP Archive, 1971.
\newblock \doi{10.1017/cbo9781107359895}.
\newblock URL \url{http://dx.doi.org/10.1017/cbo9781107359895}.

\bibitem[Bonsall and Duncan(1973)]{bonsall73BookII}
F.~F. Bonsall and J.~Duncan.
\newblock \emph{Numerical ranges II}, volume~10.
\newblock Cambridge university press, 1973.

\bibitem[Borodulin-Nadzieja et~al.(2024)Borodulin-Nadzieja, Farkas, Jachimek, and Pelczar-Barwacz]{borodulin2024zoo}
P.~Borodulin-Nadzieja, B.~Farkas, S.~Jachimek, and A.~Pelczar-Barwacz.
\newblock The {ZOO} of combinatorial {B}anach spaces.
\newblock \emph{arXiv preprint arXiv:2404.01733}, 2024.

\bibitem[Chang et~al.(2013)Chang, Gau, Wang, and Wu]{chang2013}
C.-T. Chang, H.-L. Gau, K.-Z. Wang, and P.~Y. Wu.
\newblock Numerical ranges and {G}ersgorin discs.
\newblock \emph{Linear Algebra and its Applications}, 438\penalty0 (3):\penalty0 1170--1192, 2013.
\newblock \doi{10.1016/j.laa.2012.09.003}.
\newblock URL \url{http://dx.doi.org/10.1016/j.laa.2012.09.003}.

\bibitem[Chen et~al.(2025)Chen, Greenbaum, and Trogdon]{chen2024gmres}
T.~Chen, A.~Greenbaum, and T.~Trogdon.
\newblock {GMRES}, pseudospectra, and {C}rouzeix’s conjecture for shifted and scaled {G}inibre matrices.
\newblock \emph{Mathematics of Computation}, 94\penalty0 (351):\penalty0 241--261, 2025.
\newblock \doi{10.1090/mcom/3963}.
\newblock URL \url{http://dx.doi.org/10.1090/mcom/3963}.

\bibitem[Choi(2013)]{choi2013proof}
D.~Choi.
\newblock A proof of {C}rouzeix’s conjecture for a class of matrices.
\newblock \emph{Linear Algebra and its Applications}, 438\penalty0 (8):\penalty0 3247--3257, 2013.
\newblock \doi{10.1016/j.laa.2012.12.045}.
\newblock URL \url{http://dx.doi.org/10.1016/j.laa.2012.12.045}.

\bibitem[Chui et~al.(1977)Chui, Smith, Smith, and Ward]{Chui1977}
C.~Chui, P.~Smith, R.~Smith, and J.~Ward.
\newblock ${L}$-ideals and numerical range preservation.
\newblock \emph{Illinois Journal of Mathematics}, 21\penalty0 (2):\penalty0 365--373, 1977.
\newblock \doi{10.1215/ijm/1256049420}.
\newblock URL \url{http://dx.doi.org/10.1215/ijm/1256049420}.

\bibitem[Cohen(2018)]{Cohen2018}
G.~Cohen.
\newblock Doubly power-bounded operators on ${L}^p$, $2\ne p> 1$.
\newblock \emph{Journal of Mathematical Analysis and Applications}, 466\penalty0 (2):\penalty0 1327--1336, 2018.
\newblock \doi{10.1016/j.jmaa.2018.06.064}.
\newblock URL \url{http://dx.doi.org/10.1016/j.jmaa.2018.06.064}.

\bibitem[Crouzeix(2004)]{Crouzeix2004}
M.~Crouzeix.
\newblock Bounds for analytical functions of matrices.
\newblock \emph{Integral Equations and Operator Theory}, 48\penalty0 (4):\penalty0 461--477, 2004.
\newblock \doi{10.1007/s00020-002-1188-6}.
\newblock URL \url{http://dx.doi.org/10.1007/s00020-002-1188-6}.

\bibitem[Crouzeix(2007)]{Crouzeix2007}
M.~Crouzeix.
\newblock Numerical range and functional calculus in {H}ilbert space.
\newblock \emph{Journal of Functional Analysis}, 244\penalty0 (2):\penalty0 668--690, 2007.
\newblock ISSN 0022-1236.
\newblock URL \url{https://doi.org/10.1016/j.jfa.2006.10.013}.

\bibitem[Crouzeix and Palencia(2017)]{Crouzeix2017}
M.~Crouzeix and C.~Palencia.
\newblock The numerical range is a $(1+\sqrt{2})$-spectral set.
\newblock \emph{SIAM Journal on Matrix Analysis and Applications}, 38\penalty0 (2):\penalty0 649--655, 2017.
\newblock \doi{10.1137/17m1116672}.
\newblock URL \url{http://dx.doi.org/10.1137/17m1116672}.

\bibitem[Crouzeix et~al.(2024)Crouzeix, Greenbaum, and Li]{crouzeix2024numerical}
M.~Crouzeix, A.~Greenbaum, and K.~Li.
\newblock Numerical bounds on the {C}rouzeix ratio for a class of matrices.
\newblock \emph{Calcolo}, 61\penalty0 (2):\penalty0 1--18, 2024.
\newblock \doi{10.1007/s10092-024-00580-6}.
\newblock URL \url{http://dx.doi.org/10.1007/s10092-024-00580-6}.

\bibitem[Delyon and Delyon(1999)]{delyondelyon1999}
B.~Delyon and F.~Delyon.
\newblock Generalization of von {N}eumann's spectral sets and integral representation of operators.
\newblock \emph{Bull. Soc. Math. France}, 127\penalty0 (1):\penalty0 25--41, 1999.
\newblock ISSN 0037-9484.
\newblock URL \url{http://www.numdam.org/item?id=BSMF_1999__127_1_25_0}.

\bibitem[Farah et~al.(2021)Farah, Katsimpas, and Vaccaro]{Farah2021}
I.~Farah, G.~Katsimpas, and A.~Vaccaro.
\newblock Embedding ${C}^*$-algebras into the {C}alkin algebra.
\newblock \emph{International Mathematics Research Notices}, 2021\penalty0 (11):\penalty0 8188--8224, 2021.
\newblock \doi{10.1093/imrn/rnz058}.
\newblock URL \url{http://dx.doi.org/10.1093/imrn/rnz058}.

\bibitem[Gillespie(1975)]{gillespie1975spectral}
T.~Gillespie.
\newblock A spectral theorem for ${L}^p$ translations.
\newblock \emph{Journal of the London Mathematical Society}, 2\penalty0 (4):\penalty0 499--508, 1975.
\newblock \doi{10.1112/jlms/s2-11.4.499}.
\newblock URL \url{http://dx.doi.org/10.1112/jlms/s2-11.4.499}.

\bibitem[Gillespie(2015)]{Gillespie2015}
T.~Gillespie.
\newblock Power-bounded invertible operators and invertible isometries on ${L}^p$ spaces.
\newblock In \emph{Operator Semigroups Meet Complex Analysis, Harmonic Analysis and Mathematical Physics}, pages 241--252. Springer, 2015.
\newblock \doi{10.1007/978-3-319-18494-4_15}.
\newblock URL \url{http://dx.doi.org/10.1007/978-3-319-18494-4_15}.

\bibitem[Gowers(2009)]{Gowers}
W.~Gowers.
\newblock Must an ``explicitly defined'' {B}anach space contain $c_0$ or $\ell^p$ ?, 2009.

\bibitem[Hildebrandt(1966)]{hildebrandt66}
S.~Hildebrandt.
\newblock {\"U}ber den numerischen {W}ertebereich eines {O}perators.
\newblock \emph{Mathematische Annalen}, 163\penalty0 (3):\penalty0 230--247, 1966.
\newblock \doi{10.1007/bf02052287}.
\newblock URL \url{http://dx.doi.org/10.1007/bf02052287}.

\bibitem[Johnson(1974)]{Johnson1974}
C.~R. Johnson.
\newblock Gersgorin sets and the field of values.
\newblock \emph{Journal of Mathematical Analysis and Applications}, 45\penalty0 (2):\penalty0 416--419, 1974.
\newblock \doi{10.1016/0022-247x(74)90082-1}.
\newblock URL \url{http://dx.doi.org/10.1016/0022-247x(74)90082-1}.

\bibitem[Katsnelson and Matsaev(1966)]{KM66}
V.~Katsnelson and V.~Matsaev.
\newblock Spectral sets for operators in a {B}anach space and estimates of functions of finite-dimensional operators (in russian).
\newblock \emph{Teor. Funkcii Funkcional. Anal. i Prilozen.}, 3:\penalty0 3--10, 1966.

\bibitem[Knese(2024)]{knese2024three}
G.~Knese.
\newblock Three radii associated to {S}chur functions on the polydisk.
\newblock \emph{arXiv preprint arXiv:2410.21693}, 2024.

\bibitem[Lebow(1968)]{lebow1968power}
A.~Lebow.
\newblock A power-bounded operator that is not polynomially bounded.
\newblock \emph{Michigan Mathematical Journal}, 15\penalty0 (4):\penalty0 397--399, 1968.
\newblock \doi{10.1307/mmj/1029000094}.
\newblock URL \url{http://dx.doi.org/10.1307/mmj/1029000094}.

\bibitem[Littlewood(1966)]{Littlewood1966}
J.~E. Littlewood.
\newblock On polynomials $\sum^n\pm z^m$, $\sum^ne^{\alpha_m}iz^m$, $z=e^{\theta i}$.
\newblock \emph{Journal of the London Mathematical Society}, s1-41\penalty0 (1):\penalty0 367--376, 1966.
\newblock \doi{10.1112/jlms/s1-41.1.367}.
\newblock URL \url{http://dx.doi.org/10.1112/jlms/s1-41.1.367}.

\bibitem[Lumer(1961)]{lumer61}
G.~Lumer.
\newblock Semi-inner-product spaces.
\newblock \emph{Transactions of the American Mathematical Society}, 100:\penalty0 29--43, 1961.
\newblock \doi{10.2307/1993352}.
\newblock URL \url{https://api.semanticscholar.org/CorpusID:121238992}.

\bibitem[Malman et~al.(2024)Malman, Mashreghi, O'Loughlin, and Ransford]{malman2024double}
B.~Malman, J.~Mashreghi, R.~O'Loughlin, and T.~Ransford.
\newblock Double-layer potentials, configuration constants and applications to numerical ranges.
\newblock \emph{arXiv preprint (arXiv:2407.19049)}, 2024.

\bibitem[M{\"u}ller(2010)]{Muller2010}
V.~M{\"u}ller.
\newblock The joint essential numerical range, compact perturbations, and the {O}lsen problem.
\newblock \emph{Studia Math}, 197:\penalty0 275--290, 2010.
\newblock \doi{10.4064/sm197-3-5}.
\newblock URL \url{http://dx.doi.org/10.4064/sm197-3-5}.

\bibitem[O'Loughlin and Virtanen(2023)]{o2023crouzeix}
R.~O'Loughlin and J.~Virtanen.
\newblock Crouzeix's conjecture for new classes of matrices.
\newblock \emph{Linear Algebra and its Applications}, 2023.
\newblock \doi{10.1016/j.laa.2023.12.008}.
\newblock URL \url{https://doi.org/10.1016/j.laa.2023.12.008}.

\bibitem[Paulsen and Singh(2022)]{paulsen2022}
V.~I. Paulsen and D.~Singh.
\newblock A simple proof of {B}ohr's inequality, 2022.
\newblock URL \url{https://arxiv.org/abs/2201.10251}.

\bibitem[Ransford and Schwenninger(2018)]{ransford2018remarks}
T.~Ransford and F.~L. Schwenninger.
\newblock Remarks on the {C}rouzeix--{P}alencia proof that the numerical range is a (1+$\sqrt2$)-spectral set.
\newblock \emph{SIAM Journal on Matrix Analysis and Applications}, 39\penalty0 (1):\penalty0 342--345, 2018.
\newblock \doi{10.1137/17M1143757}.
\newblock URL \url{https://doi.org/10.1137/17M1143757}.

\bibitem[Rudin(1959)]{rudin1959some}
W.~Rudin.
\newblock Some theorems on {F}ourier coefficients.
\newblock \emph{Proceedings of the American Mathematical Society}, 10\penalty0 (6):\penalty0 855--859, 1959.
\newblock \doi{10.2307/2033608}.
\newblock URL \url{http://dx.doi.org/10.2307/2033608}.

\bibitem[Schwenninger and Vries(2024)]{schwenninger2024abstract}
F.~Schwenninger and J.~d. Vries.
\newblock On abstract spectral constants.
\newblock In \emph{Operator and Matrix Theory, Function Spaces, and Applications}, pages 335--350. Springer, 2024.
\newblock \doi{10.1007/978-3-031-50613-0\_15}.
\newblock URL \url{https://doi.org/10.1007/978-3-031-50613-0_15}.
\newblock Edited by M. Ptak, H.J. Woerdeman and M. Wojtylak.

\bibitem[Shapiro(1952)]{shapiro1952extremal}
H.~S. Shapiro.
\newblock \emph{Extremal problems for polynomials and power series}.
\newblock PhD thesis, Massachusetts Institute of Technology, 1952.

\bibitem[Stampfli and Williams(1968)]{stampfli1968}
J.~Stampfli and J.~P. Williams.
\newblock Growth conditions and the numerical range in a {B}anach algebra.
\newblock \emph{Tohoku Mathematical Journal, Second Series}, 20\penalty0 (4):\penalty0 417--424, 1968.
\newblock \doi{10.2748/tmj/1178243070}.
\newblock URL \url{http://dx.doi.org/10.2748/tmj/1178243070}.

\bibitem[Sz.-Nagy et~al.(2010)Sz.-Nagy, Foias, Bercovici, and K\'{e}rchy]{NagyFoiasbook}
B.~Sz.-Nagy, C.~Foias, H.~Bercovici, and L.~K\'{e}rchy.
\newblock \emph{Harmonic analysis of operators on {H}ilbert space}.
\newblock Universitext. Springer, New York, second, enlarged edition, 2010.
\newblock ISBN 978-1-4419-6093-1.
\newblock \doi{10.1007/978-1-4419-6094-8}.
\newblock URL \url{https://doi.org/10.1007/978-1-4419-6094-8}.

\end{thebibliography}



\end{document}